\newcounter{thmcounter}
\numberwithin{thmcounter}{section}
\numberwithin{equation}{thmcounter}
\newtheorem{theorem}[thmcounter]{Theorem}
\newtheorem{proposition}[thmcounter]{Proposition}
\newtheorem{lemma}[thmcounter]{Lemma}
\newtheorem{corollary}[thmcounter]{Corollary}
\theoremstyle{definition}
\newtheorem{definition}[thmcounter]{Definition}
\newtheorem{example}[thmcounter]{Example}
\newtheorem{remark}[thmcounter]{Remark}
\newtheorem{conjecture}[thmcounter]{Conjecture}
\newtheoremstyle{claim}{9pt}{3pt}{}{\parindent}{\bf}{.}{1em}{}
\theoremstyle{claim}
\newtheorem{claim}[equation]{Claim}
\newenvironment{namelist}[1]{%
\begin{list}{}
{
\settowidth{\labelwidth}{#1}%
\setlength{\labelsep}{0.3em}%
\setlength{\leftmargin}{\labelwidth}%
\addtolength{\leftmargin}{\labelsep}}}{%
\end{list}}
\newcommand{\nZ}{\mathbb{Z}}                     
\newcommand{\nR}{\mathbb{R}}                     
\newcommand{\nC}{\mathbb{C}}                     
\newcommand{\nQ}{\mathbb{Q}}                     
\newcommand{\nP}{\mathbb{P}}                     
\newcommand{\nA}{\mathbb{A}}                     
\newcommand{\sO}{\mathscr{O}}                    
\newcommand{\sI}{\mathscr{I}}                    
\newcommand{\mf}[1]{\mathfrak{#1}}
\DeclareMathOperator{\ann}{ann}                  
\DeclareMathOperator{\Bl}{Bl}                    
\DeclareMathOperator{\Char}{char}                
\DeclareMathOperator{\codim}{codim}              
\DeclareMathOperator{\Fitt}{Fitt}                
\DeclareMathOperator{\Hom}{Hom}                  
\DeclareMathOperator{\Jac}{Jac}                  
\DeclareMathOperator{\sHom}{\mathscr{H}om}       
\DeclareMathOperator{\Spec}{Spec}                
\DeclareMathOperator{\spec}{Spec}                
\DeclareMathOperator{\pd}{pd}                    
\DeclareMathOperator{\Ext}{Ext}                  
\newcounter{rkcounter}             
\begin{document}

\title[Mather-Jacobian multiplier ideals]{A note on Mather-Jacobian multiplier ideals}
\author{Wenbo Niu and Bernd Ulrich}

\address{Department of Mathematics, Purdue University, West Lafayette, IN 47907-2067, USA}
\email{niu6@math.purdue.edu}

\address{Department of Mathematics, Purdue University, West Lafayette, IN 47907-2067, USA}
\email{ulrich@math.purdue.edu}

\subjclass[2010]{13A10, 14Q20}

\keywords{Multiplier ideal, conductor ideal, canonical sheaf}

\begin{abstract} By using Mather-Jacobian multiplier ideals, we first prove a formula on comparing Grauert-Riemenschneider canonical sheaf with canonical sheaf of a variety over an algebraically closed field of characteristic zero. Then we turn to study Mather-Jacobian multiplier ideals on algebraic curve, in which case the definition of Mather-Jacobian multiplier ideal can be extended to a ground field of any characteristic. We show that Mather-Jacobian multiplier ideal on curves is essentially the same as an integrally closed ideal. Finally by comparing conductor ideal with Mather-Jacobian multiplier ideal, we give a criterion when an algebraic curve is a locally complete intersection.
\end{abstract}

\maketitle

\section{Introduction}
\noindent Recently, the theory of Mather-Jacobian multiplier ideals (MJ-multiplier for short) on arbitrary varieties over an algebraically closed field of characteristic zero has been developed by Ein-Ishii-Mustata \cite{Ein:MultIdeaMatherDis} and Ein-Ishii \cite{Ein:SingMJDiscrapency} (see also de Fernex-Docampo \cite{Roi:JDiscrepancy} for a similar theory on normal varieties). This new notion generalizes the classical theory of multiplier ideals on nonsingular varieties (or normal $\nQ$-Gorenstein varieties) and has found some interesting applications.  Throughout this paper, the ground field $k$ is always assumed to be {\em algebraically closed}. By a variety we mean a reduced equidimensional separated scheme of finite type over $k$.

We first prove the following result on comparing Grauert-Riemenschneider canonical sheaf with canonical sheaf of a variety by using MJ-multiplier ideals. This result partially generalizes a formula established by de Fernex-Docampo in \cite[Theorem C]{Roi:JDiscrepancy} on a normal variety.

\begin{theorem} Let $X$ be a variety over $k$ of characteristic zero. Let $\omega_X$ be the canonical sheaf, $\omega^{GR}_X$ the Grauert-
Riemenschneider canonical sheaf, and $\widehat{\sI}(\sO_X)$ the Mather-Jacobian multiplier ideal. Then
$$\widehat{\sI}(\sO_X)\cdot\omega_X\subseteq \omega^{GR}_X.$$
\end{theorem}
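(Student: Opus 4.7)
The plan is to fix a log resolution $f\colon\widetilde X\to X$ and translate the asserted inclusion into a divisor-by-divisor valuation inequality on $\widetilde X$, which I will verify using the fundamental class map of $X$ together with the Jacobian ideal. Let $d=\dim X$ and fix a log resolution $f\colon\widetilde X\to X$ such that $\Jac_X\cdot\sO_{\widetilde X}=\sO_{\widetilde X}(-J)$ and the image of the natural map $\psi\colon f^{*}\wedge^{d}\Omega_X^{1}\to\omega_{\widetilde X}$ equals $\omega_{\widetilde X}(-\hat K)$, where $J=J_{\widetilde X/X}$ and $\hat K=\hat K_{\widetilde X/X}$. By definition, $\widehat{\sI}(\sO_X)=f_*\sO_{\widetilde X}(\hat K-J)$ and $\omega_X^{GR}=f_*\omega_{\widetilde X}$. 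Since $X$ is reduced and equidimensional, $\omega_X$ is torsion-free and embeds into the constant sheaf $\omega_{K(X)}$ of rational top-forms on $X$. Write $k_E=\ord_E\hat K$ and $m_E=\ord_E J$ for each prime divisor $E\subset\widetilde X$, and let $\nu_E$ denote the order of a rational top-form along $E$ measured against a local generator of $\omega_{\widetilde X}$.

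The crux will be the divisorial bound
\[
\nu_E(\alpha)\;\ge\;k_E-m_E\qquad\text{for every local }\alpha\in\omega_X\text{ and every prime divisor }E\subset\widetilde X.\qquad(\dagger)
\]
Granting $(\dagger)$, any local $j\in\widehat{\sI}(\sO_X)=f_*\sO_{\widetilde X}(\hat K-J)$ satisfies $\ord_E(f^{*}j)\ge m_E-k_E$, so $\nu_E(j\alpha)=\ord_E(f^{*}j)+\nu_E(\alpha)\ge 0$ on every $E$. Hence $j\alpha$ extends to a regular section of $\omega_{\widetilde X}$, giving $j\alpha\in f_*\omega_{\widetilde X}=\omega_X^{GR}$.

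To prove $(\dagger)$, consider the fundamental class map $c\colon\wedge^{d}\Omega_X^{1}\to\omega_X$. On the smooth locus of $X$ both $c$ and $\psi$ coincide with the canonical identification $\wedge^{d}\Omega^{1}=\omega$; since $\omega_X$ and $\omega_{\widetilde X}$ are torsion-free, $c(\omega)$ and $\psi(f^{*}\omega)$ then represent the same rational top-form in $\omega_{K(X)}$ for every local $\omega\in\wedge^{d}\Omega_X^{1}$. As $\psi(f^{*}\omega)\in\omega_{\widetilde X}(-\hat K)$, this gives $\nu_E(c(\omega))\ge k_E$. Invoking the Lipman--Sathaye/Piene-type inclusion $\Jac_X\cdot\omega_X\subseteq\uIm(c)$, for any local $\alpha\in\omega_X$ and any $j_0\in\Jac_X$ one has $j_0\alpha=c(\omega)$ for some $\omega$, whence $\ord_E(f^{*}j_0)+\nu_E(\alpha)\ge k_E$. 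Choosing $j_0$ locally with $\ord_E(f^{*}j_0)=m_E$, possible because $\Jac_X\cdot\sO_{\widetilde X}=\sO_{\widetilde X}(-J)$, establishes $(\dagger)$.

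The step that demands the most care is the classical input $\Jac_X\cdot\omega_X\subseteq\uIm(c)$, a Lipman--Sathaye-type statement relating the fundamental class of a reduced equidimensional scheme to its Jacobian ideal; it is where the subtle interplay between the dualizing sheaf and the Jacobian really enters. Once this is in hand, the remainder of the argument is a direct order-of-vanishing computation on the log resolution.
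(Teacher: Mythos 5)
Your reduction of the theorem to the inclusion $\Jac_X\cdot\omega_X\subseteq\uIm(c)$, where $c\colon\wedge^d\Omega^1_X\to\omega_X$ is the fundamental class map, is sound: the valuation-theoretic bookkeeping in $(\dagger)$, the comparison of $c$ with $\psi$ via the generic agreement of torsion-free sheaves, and the final order-of-vanishing argument are all correct. But the step you defer to as a ``Lipman--Sathaye/Piene-type'' classical input is precisely the entire technical content of the theorem, and you do not prove it. It is classical only when $X$ is a local complete intersection, in which case $\uIm(c)=\Jac_X\cdot\omega_X$ exactly; for an arbitrary reduced equidimensional variety no off-the-shelf reference gives $\Jac_X\cdot\omega_X\subseteq\uIm(c)$, and the Lipman--Sathaye theorem itself concerns $\Jac_{S/R}$ and the conductor of the integral closure, not the image of the fundamental class. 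So as written the proposal has a genuine gap at its core, which you yourself flag but leave open.

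The paper closes exactly this gap by generic linkage. Embedding $X\subseteq\nA^N$ with $c=\codim X$ and choosing general generators $F_1,\dots,F_r$ of $I_X$, each $c$-element subset $J$ defines a complete intersection $V_J$ and a link ideal $q_J:=(I_{V_J}:I_X)\cdot\sO_X$; by Ein--Musta\c{t}\u{a}'s results on the maps $\wedge^d\Omega^1_X\to\omega_X\to\omega_{V_J}|_X\to\sO_X$ one gets isomorphisms $\alpha_J\colon\omega_X\to q_J$ carrying $\uIm(c)$ to $\Jac_J:=\Jac_{V_J}\cdot\sO_X$. The desired inclusion then becomes $\Jac_X\cdot q_0\subseteq\Jac_0$, which follows from two facts special to this general position: $\Jac_X=\sum_J\Jac_J$, and the transition isomorphisms $\alpha_J\circ\alpha_0^{-1}$ are multiplications by fractions $b_J/a_J$, so that $\Jac_J\cdot q_0=q_J\cdot\Jac_0\subseteq\Jac_0$ for every $J$. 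To complete your argument you would need to supply this (or an equivalent) proof of $\Jac_X\cdot\omega_X\subseteq\uIm(c)$; once that is in place, your valuation computation does yield the theorem.
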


\noindent It should be mentioned that the formula of de Fernex-Docampo on a normal variety is stronger than the above one.  However, their formula seems hard to be established on any variety. We also mention that when $X$ is a locally complete intersection, it is well-known to experts that $\widehat{\sI}(\sO_X)\cdot\omega_X=\omega^{GR}_X$. (See Remark \ref{rmk:01} for details.)

Next we turn to understand MJ-multiplier ideals on algebraic curves, which is the first case one should investigate. Let $X$ be an algebraic curve, i.e., $X$ is a dimension one variety over $k$. We first show that the definition of MJ-multiplier ideal can be extended for $k$ of any characteristic, see Proposition \ref{p:10}. Simply by the definition, any MJ-multiplier ideal is automatically integrally closed and inside $\widehat{\sI}(\sO_X)$. We then show that this fact is actually describing MJ-multiplier ideals on curves completely.

\begin{theorem}\label{p:01} Let $X$ be an algebraic curve over $k$ of any characteristic and $\mf{a}\subseteq\sO_X$ be an ideal. Then $\mf{a}$ is a Mather-Jacobian multiplier ideal if and only if $\mf{a}$ is integrally closed and $\mf{a}\subseteq\widehat{\sI}(\sO_X)$.
\end{theorem}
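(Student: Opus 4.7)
The forward direction is the content of the paragraph immediately preceding the theorem: any MJ-multiplier ideal arises as the pushforward of an invertible fractional ideal sheaf from the normalization $\nu:\widetilde{X}\to X$ (which serves as a log resolution for a curve), so it is integrally closed in $\sO_X$, and the largest such ideal, corresponding to $\mf{b}=\sO_X$ and $c=0$, is exactly $\widehat{\sI}(\sO_X)$.

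For the converse, I would work locally at each point $x\in X$: let $R=\sO_{X,x}$, let $\widetilde{R}$ be its integral closure (a finite product of DVRs), and let $\mathfrak{C}$ be the conductor. The key inputs are: (i) in dimension one, an ideal $\mf{a}\subseteq R$ is integrally closed iff $\mf{a}=\mf{a}\widetilde{R}\cap R$; (ii) the explicit curve-case formula supplied by Proposition \ref{p:10}, giving $\widehat{\sI}(\mf{b}^c)\cdot\widetilde{R}=\sO_{\widetilde{X}}(-\lfloor cF\rfloor-M)$ where $\mf{b}\widetilde{R}=\sO_{\widetilde{X}}(-F)$ and $M$ is the effective MJ-discrepancy divisor on $\widetilde{X}$, so in particular $\widehat{\sI}(\sO_X)\widetilde{R}=\sO_{\widetilde{X}}(-M)$; and (iii) the containment $\widehat{\sI}(\sO_X)\subseteq\mathfrak{C}$, equivalently $M\geq G$ where $\mathfrak{C}\widetilde{R}=\sO_{\widetilde{X}}(-G)$.

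Writing $\mf{a}\widetilde{R}=\sO_{\widetilde{X}}(-E)$ with $E=\sum e_iP_i$, the hypothesis $\mf{a}\subseteq\widehat{\sI}(\sO_X)$ becomes $e_i\geq m_i$ for each preimage $P_i$ of $x$. Pick an integer $N>\max_i g_i$ and, for each $i$, an integer $f_i\in[N(e_i-m_i),N(e_i-m_i+1))$ with $f_i\geq g_i$; this is possible because the half-open interval contains $N>g_i$ consecutive integers. Setting $F:=\sum f_iP_i$ and $\mf{b}:=\sO_{\widetilde{X}}(-F)$, the inequality $F\geq G$ places $\mf{b}$ inside $\mathfrak{C}\subseteq R$, so $\mf{b}$ is a genuine ideal of $R$ with $\mf{b}\widetilde{R}=\sO_{\widetilde{X}}(-F)$. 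The formula then yields $\widehat{\sI}(\mf{b}^{1/N})\widetilde{R}=\sO_{\widetilde{X}}(-M-\lfloor F/N\rfloor)=\sO_{\widetilde{X}}(-E)=\mf{a}\widetilde{R}$, and contracting to $R$ gives $\widehat{\sI}(\mf{b}^{1/N})=\mf{a}\widetilde{R}\cap R=\mf{a}$ by integral closedness of both sides.

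The main obstacle is setting up the three local inputs above; (iii) in particular is the comparison between the conductor and the MJ-multiplier ideal that this paper is explicitly concerned with. A secondary point is globalization: since $X$ has only finitely many singular points, one can combine the local constructions by using the same integer $N$ uniformly and taking $\mf{b}$ to agree with the local choices at each singular point, producing a single global pair $(\mf{b},c)$ with $\widehat{\sI}(\mf{b}^c)=\mf{a}$ on all of $X$.
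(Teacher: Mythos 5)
Your argument is correct and is in substance the paper's own proof: both hinge on the observation that an integrally closed ideal contained in $\widehat{\sI}(\sO_X)\subseteq\mf{C}_X$ is automatically an $\sO_{X'}$-module of the form $\sO_{X'}(-E)$ with $E\geq M$ (where $\sO_{X'}(-M)=\widehat{\sI}(\sO_X)\cdot\sO_{X'}$), and both then realize $\mf{a}=\widehat{\sI}(\mf{b}^{1/N})$ by choosing $N\gg 0$ and an integral divisor $F$ with $\lfloor F/N\rfloor=E-M$ whose pushforward $\mf{b}$ lands inside $\sO_X$. The only cosmetic differences are that the paper works globally with divisors on the normalization from the outset (so no separate globalization step is needed) and certifies $\mf{b}\subseteq\sO_X$ via $F\geq E$ rather than via the conductor divisor $G$, while your half-open interval $[N(e_i-m_i),N(e_i-m_i+1))$ is exactly the slack the paper exploits when it perturbs $nB$ by the terms $a_jE_j$ with $r_j+a_j=0$.
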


Note that our theorem is global in nature. The question that whether an integrally closed ideal is a multiplier ideal was initially raised by Lipman-Watanabe \cite{Lipman:IntClosedIdealMultipler}. They proved that on a nonsingular surface, locally a multiplier ideal is the same as an integrally closed ideal. It was also studied in the work of Favre-Jonsson \cite{Favre:ValMultiplierIdeals}. Later, it was generalized to log terminal surface by Tucker \cite{Tucker:IntIdealMulitplier}. Finally the question was completely solved by the celebrated work of Lazarsfeld-Lee \cite{Lazarsfeld:LocalSyzyMultiplier} for higher dimensional nonsingular varieties. However, as complementary, we still wonder the same question on any surface for MJ-multiplier ideals, see Conjecture \ref{p:04}.

On the curve $X$, there are three intrinsic ideals: the Jacobian ideal $\Jac_X$, MJ-multiplier ideal $\widehat{\sI}(\sO_X)$, and the conductor ideal $\mf{C}_X$. They all capture the singularities of $X$ and have the inclusion $\overline{\Jac}_X\subseteq \widehat{\sI}(\sO_X)\subseteq\mf{C}_X$. (This inclusion is also true for higher dimensional varieties.) Our last theorem shows a criterion for a curve to be a local complete intersection by using these ideals.

\begin{theorem}\label{p:02} Let $X$ be an algebraic curve over $k$ of any characteristic. Let $\widehat{\sI}(\sO_X)$ be the Mather-Jacobian multiplier ideal and $\mf{C}_X$ be the conductor ideal. Then $X$ is a local complete intersection if and only if $\widehat{\sI}(\sO_X)=\mf{C}_X$.
\end{theorem}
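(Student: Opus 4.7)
My plan is to pass to the normalization $\pi\colon\widetilde{X}\to X$, which for the reduced one-dimensional $X$ is a finite birational resolution of singularities. Both $\widehat{\sI}(\sO_X)$ and $\mf{C}_X$ are integrally closed ideals of $\sO_X$: the first by its construction as a multiplier ideal (cf.\ Theorem~\ref{p:01}), and the second because $\mf{C}_X$ is already stable under multiplication by $\widetilde{\sO}_X$. Each ideal is therefore determined by its extension to $\widetilde{\sO}_X$ via intersection with $\sO_X$, so the theorem reduces to showing
\[
 \widehat{\sI}(\sO_X)\cdot\widetilde{\sO}_X \;=\; \mf{C}_X\cdot\widetilde{\sO}_X \quad\Longleftrightarrow\quad X\ \text{is LCI.}
\]

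I would next make the left-hand side explicit by using that $\widetilde{X}$ is smooth. On the smooth curve the Mather discrepancy divisor $\widehat{K}_{\widetilde{X}/X}$ is cut out by the Dedekind different $\mf{d}_{\widetilde{X}/X} = \Fitt_0(\Omega^1_{\widetilde{X}/X})$; that is, $\sO_{\widetilde{X}}(-\widehat{K}_{\widetilde{X}/X}) = \mf{d}_{\widetilde{X}/X}$. Combined with $\Jac_X\cdot\widetilde{\sO}_X = \sO_{\widetilde{X}}(-\Jac_{\widetilde{X}/X})$ and the formula $K^{MJ}_{\widetilde{X}/X} = \widehat{K}_{\widetilde{X}/X} - \Jac_{\widetilde{X}/X}$, this gives
\[
 \widehat{\sI}(\sO_X)\cdot\widetilde{\sO}_X \;=\; \Jac_X\cdot\mf{d}_{\widetilde{X}/X}^{-1}\cdot\widetilde{\sO}_X.
\]
The equivalence in the theorem then becomes $\Jac_X\cdot\widetilde{\sO}_X = \mf{d}_{\widetilde{X}/X}\cdot\mf{C}_X\cdot\widetilde{\sO}_X$ if and only if $X$ is a local complete intersection.

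This last equivalence is a classical criterion in the spirit of Piene and Teissier: for any reduced curve singularity $R\subset\widetilde{R}$ one has the general containment $\Jac_R\cdot\widetilde{R} \subseteq \mf{d}_{\widetilde{R}/R}\cdot\mf{C}_R\cdot\widetilde{R}$, with equality precisely at points where $R$ is a complete intersection. The forward implication can be read from the Koszul presentation of a complete-intersection defining ideal, which relates $\Fitt_0(\widetilde{R}/R)$ to the conductor, while the converse follows from a length computation that bounds the minimal number of generators of the defining ideal by its height, forcing it to be a regular sequence. The main obstacle in implementing this plan is establishing this Piene-type criterion in the generality required here, namely over a ground field of arbitrary characteristic and allowing several analytic branches at each singular point; this is the substantive content of the proof beyond the formal calculations outlined above.
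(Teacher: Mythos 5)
Your opening reduction is correct and matches the paper: since $f$ is finite and both ideals are $\sO_{X'}$-modules, the hypothesis $\widehat{\sI}(\sO_X)=\mf{C}_X$ is equivalent to $\Jac_X\cdot\sO_{X'}=\mf{C}_X\cdot\Jac(X'/X)$, where $\Jac(X'/X)=\Fitt_0(\Omega^1_{X'/X})=\sO_{X'}(-\widehat K_{X'/X})$; this is exactly equation (\ref{eq:11}) in the proof of Theorem \ref{p:05}. The forward implication (LCI implies equality) is indeed classical, in the spirit of Piene. But the converse --- that the equality forces $X$ to be a local complete intersection --- is the entire content of the theorem, and you do not prove it: you assert it as part of a ``classical criterion'' and then concede in your last sentence that establishing it ``is the substantive content of the proof beyond the formal calculations outlined above.'' That is precisely the gap. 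Your one-line sketch of the converse (``a length computation that bounds the minimal number of generators of the defining ideal by its height'') is not an argument; no such bound is extracted from the stated equality in any evident way, and I am not aware of a reference that proves this converse in the stated generality.

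For comparison, here is what actually has to be done. One chooses a general Noether normalization $\mu:X\to\nA^1$ so that, at a fixed point $\mf{p}$ with semilocal normalization $S$, the relative Jacobians compute the absolute ones: $\Jac(X'/X)\cdot S=\Jac(X'/\nA)\cdot S$ and $(\Jac_X)_{\mf p}\cdot S=\Jac(X/\nA)_{\mf p}\cdot S$ (Claim \ref{eq:07}, which uses that $S$ is regular semilocal so $\Omega^1_{S/k}$ is generated by a single general $1\otimes dy$). The hypothesis then becomes $\mf{C}_{X,\mf p}\cdot(\Jac(X'/\nA)\cdot S)=\Jac(X/\nA)_{\mf p}\cdot S$, which via the Dedekind complementary modules is rewritten as $R:\mf{C}(X'/\nA)_{\mf p}=\Jac(X/\nA)_{\mf p}\cdot S$. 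A duality/length argument (Claim \ref{eq:05}, resting on Lemma \ref{p:09} and a canonical module $R\subseteq K\subseteq S$) then squeezes $\Jac(X/\nA)_{\mf p}$ between $R\cdot\delta$ and $S\cdot\delta$ and forces it to be a \emph{principal} $R$-ideal. By \cite{Lipman:JacobianIdeal} this gives $\pd_R(\Omega^1_{X/\nA})_{\mf p}\le 1$, hence $\pd_R(\Omega^1_{X/k})_{\mf p}\le 1$, and the theorems of \cite{Vasconcelos:NormDiffer} and \cite{Ferrand:CompInter} conclude that $X$ is LCI at $\mf p$. (Incidentally, the Lipman--Sathaye theorem \cite{Lipman:JacBrianSkoda} is also what guarantees $\widehat{\sI}(\sO_X)\subseteq\sO_X$ in arbitrary characteristic, so the general Noether normalization is doing double duty.) None of this machinery appears in your proposal, so as it stands the proof is incomplete.
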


\noindent We point out that the above theorem is not true in general for higher dimensional varieties, see Example \ref{eg:01}. But it can be established for codimension one points of any variety, see Remark \ref{rmk:03}. It would be interesting to understand algebraic or geometric consequences when conductor ideal is the same as MJ-multiplier ideal for any variety. We prove some evidences in Proposition \ref{p:06} along this direction.

\vspace{0.3cm}
{\em Acknowledgement}. We are grateful to Lawrence Ein and Shihoko Ishii for valuable discussions.

\section{Mather-Jacobian Multiplier ideals}

\noindent Throughout this section, we assume the ground field $k$ is of {\em characteristic zero}. We start by recalling the definition of MJ-multiplier ideal defined in \cite{Ein:MultIdeaMatherDis}. For more detailed discussion, we refer to the paper \cite{Ein:MultIdeaMatherDis}.

\begin{definition} Let $X$ be a variety over $k$ of dimension $d$ and $f:Y\longrightarrow X$ be a resolution of singularities factoring through the Nash blow-up of $X$. Then the image of the canonical homomorphism
$$f^*(\wedge^d\Omega^1_X)\longrightarrow \wedge^d\Omega^1_X$$
is an invertible sheaf of the form $\Jac_f\cdot\wedge^d\Omega^1_X$ where $\Jac_f$ is the relative Jacobian which is invertible and defines an effective divisor which is called the {\em Mather discrepancy divisor} and denoted by $\widehat{K}_{Y/X}$.
\end{definition}

\begin{definition} Let $X$ be a variety over $k$ and $\mf{a}\subseteq \sO_X$ a nonzero ideal on $X$. Given a log resolution $f:Y\longrightarrow X$ of $X$ and $\Jac_X\cdot\mf{a}$ such that $\mf{a}\cdot\sO_Y=\sO_Y(-Z)$ and $\Jac_X\cdot\sO_Y=\sO_Y(-J_{Y/X})$ for some effective divisors $Z$ and $J_{Y/X}$ on $Y$ (such resolution automatically factors through the Nash blow-up, see Remark 2.3 of \cite{Ein:MultIdeaMatherDis}). The {\em Mather-Jacobian multiplier ideal} of $\mf{a}$ of exponent $t\in \nR_{\geq 0}$ is defined by
$$\widehat{\sI}(X,\mf{a}^t):=f_*\sO_Y(\widehat{K}_{Y/X}-J_{Y/X}-\lfloor tZ\rfloor),$$
where $\lfloor\ \ \rfloor$ means the round down of an $\nR$-divisor. Sometimes we simply write it as $\widehat{\sI}(\mf{a}^t)$ and call it as {\em MJ-multiplier ideal}.
\end{definition}

\begin{remark}\label{rmk:02} (1) It has been showed in \cite[Corrolary 2.14]{Ein:MultIdeaMatherDis} that the sheaf $\widehat{\sI}(X,\mf{a}^t)$ defined above is actually an ideal of $\sO_X$.

(2) If $X$ is a curve, then we choose its normalization $f: X'\longrightarrow X$ as a resolution of singularities. In this case, we do not need to use Nash blow-up.

(3) It is clear that any MJ-multiplier ideal is contained in the ideal $\widehat{\sI}(\sO_X)$. It would be interesting to understand more about this intrinsic ideal $\widehat{\sI}(\sO_X)$.
\end{remark}

For a variety $X$, let $f:X'\longrightarrow X$ be a resolution of singularities (or simply take $X'$ to be the normalization of $X$). Then the conductor ideal $\mf{C}_X$ is defined to be
$$\mf{C}_X:=\sHom_{\sO_X}(f_*\sO_{X'},\sO_X)=\ann(f_*\sO_{X'}/\sO_X).$$

The following easy proposition shows that MJ-multiplier ideals are in fact contained in the conductor ideal. See also \cite[Corrolary 2.14]{Ein:MultIdeaMatherDis}.
\begin{proposition}\label{p:10} Let $X$ be a variety over $k$ and $\mu:X'\rightarrow X$ be the normalization. Then one has $\widehat{\sI}(\sO_X)\cdot\mu_*\sO_{X'}\subseteq \sO_X$. In particular, $\widehat{\sI}(\sO_X)\subseteq \mf{C}_X$.
\end{proposition}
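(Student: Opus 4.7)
The plan is to unwind the definition of $\widehat{\sI}(\sO_X)$ on a log resolution and reduce the inclusion to a tautology about divisorial sheaves on the smooth model, then deduce the conductor statement formally. I would start by fixing a log resolution $f: Y \to X$ of the type used in the definition of $\widehat{\sI}(\sO_X)$, so that
$$\widehat{\sI}(\sO_X) = f_*\sO_Y(\widehat{K}_{Y/X} - J_{Y/X}).$$
Since $Y$ is smooth, hence normal, $f$ factors through the normalization as $f = \mu \circ g$ with $g: Y \to X'$ proper and birational (properness because $\mu$ is separated and $f$ is proper, birationality because both $f$ and $\mu$ are birational). As $X'$ is normal and $g$ is proper birational, Zariski's main theorem yields $g_*\sO_Y = \sO_{X'}$, and hence $\mu_*\sO_{X'} = f_*\sO_Y$.

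With this identification in hand, the inclusion $\widehat{\sI}(\sO_X) \cdot \mu_*\sO_{X'} \subseteq \sO_X$ becomes nearly automatic. For local sections $s \in \widehat{\sI}(\sO_X) \subseteq f_*\sO_Y(\widehat{K}_{Y/X} - J_{Y/X})$ and $t \in \mu_*\sO_{X'} = f_*\sO_Y$, regard $s$ as a rational function on $Y$ whose divisor dominates $-(\widehat{K}_{Y/X} - J_{Y/X})$ and $t$ as a regular function on $Y$; the product $st$ then still satisfies the same bound, i.e.\ lies in $\sO_Y(\widehat{K}_{Y/X} - J_{Y/X})$, simply because this sheaf is an $\sO_Y$-module. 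Pushing forward gives $st \in \widehat{\sI}(\sO_X)$, and combining with the known containment $\widehat{\sI}(\sO_X) \subseteq \sO_X$ from Remark \ref{rmk:02}(1) completes the first assertion.

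The ``in particular'' statement is then formal: $\widehat{\sI}(\sO_X) \cdot \mu_*\sO_{X'} \subseteq \sO_X$ says precisely that $\widehat{\sI}(\sO_X)$ annihilates the quotient sheaf $\mu_*\sO_{X'}/\sO_X$, and therefore is contained in $\mf{C}_X = \ann_{\sO_X}(\mu_*\sO_{X'}/\sO_X)$. I anticipate no serious obstacle here; the only mildly nontrivial ingredient is the identification $f_*\sO_Y = \mu_*\sO_{X'}$, which uses Zariski's main theorem for proper birational morphisms onto a normal target and is available in any dimension. Everything else is formal manipulation of divisorial sheaves on a smooth variety.
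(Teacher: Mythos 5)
Your proof is correct and follows essentially the same route as the paper: the paper's one-line argument is that $\widehat{\sI}(\sO_X)$ is an ideal in both $\sO_X$ and $\mu_*\sO_{X'}$ while $\mf{C}_X$ is the largest such common ideal, and your write-up simply makes explicit why $\widehat{\sI}(\sO_X)=f_*\sO_Y(\widehat{K}_{Y/X}-J_{Y/X})$ is a $\mu_*\sO_{X'}$-module (factoring $f$ through the normalization and using $g_*\sO_Y=\sO_{X'}$) before invoking the same containment $\widehat{\sI}(\sO_X)\subseteq\sO_X$ from Remark \ref{rmk:02}(1). The passage from $\widehat{\sI}(\sO_X)\cdot\mu_*\sO_{X'}\subseteq\sO_X$ to $\widehat{\sI}(\sO_X)\subseteq\mf{C}_X$ via the annihilator description of the conductor is the same observation as the paper's appeal to maximality, so no further comment is needed.
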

\begin{proof} Since $\widehat{\sI}(\sO_X)$ is an ideal both in $\sO_X$ and $\mu_*\sO_{X'}$, the result then follows from the fact that $\mf{C}_X$ is the maximal ideal both in $\sO_X$ and $\mu_*\sO_{X'}$.
\end{proof}

\begin{theorem}\label{p:07} Let $X$ be a variety over $k$. Let $\omega_X$ be its canonical sheaf, $\omega^{GR}_X$ its Grauert-
Riemenschneider canonical sheaf, and $\widehat{\sI}(\sO_X)$ its Mather-Jacobian multiplier ideal. Then one has
$$\widehat{\sI}(\sO_X)\cdot\omega_X\subseteq \omega^{GR}_X.$$
\end{theorem}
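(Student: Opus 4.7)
The plan is to fix a log resolution $f\colon Y\to X$ computing both $\widehat{\sI}(\sO_X)=f_*\sO_Y(\widehat{K}_{Y/X}-J_{Y/X})$ and $\omega_X^{GR}=f_*\omega_Y$, and then to reduce the asserted containment to a pair of valuative inequalities along each prime divisor of the smooth variety $Y$. For local sections $g\in\widehat{\sI}(\sO_X)$ and $\eta\in\omega_X$, I view $g\eta$ as a rational $d$-form via the identification $\omega_X|_{X_{\mathrm{sm}}}\cong\omega_{X_{\mathrm{sm}}}$, and aim to show that $f^*(g\eta)$ is a regular section of $\omega_Y$, from which $g\eta\in f_*\omega_Y=\omega_X^{GR}$ will follow. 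Since $Y$ is smooth, such regularity is detected in codimension one on $Y$.

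Writing $\hat{k}_E$ and $j_E$ for the coefficients of $E$ in $\widehat{K}_{Y/X}$ and $J_{Y/X}$, the defining property of the MJ-multiplier ideal gives the first inequality $v_E(f^*g)\ge j_E-\hat{k}_E$ for free, so the crux is the companion bound
$$v_E(f^*\eta)\ge \hat{k}_E-j_E\qquad\text{for every local section }\eta\in\omega_X.$$
To establish this, since $\Jac_X\cdot\sO_Y=\sO_Y(-J_{Y/X})$ is invertible, I would locally choose $s\in\Jac_X$ with $v_E(f^*s)=j_E$; combining with the inclusion
$$\Jac_X\cdot\omega_X\subseteq\mathrm{image}\bigl(\wedge^d\Omega^1_X\to\omega_X\bigr),$$
which for locally complete intersections follows at once from the Poincar\'e residue description of $\omega_X$ and for general reduced equidimensional $X$ is provided by the fundamental class theory of Kunz and Lipman, produces $\tilde\eta\in\wedge^d\Omega^1_X$ with image $s\eta$ in $\omega_X$. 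The definition of the Mather discrepancy says precisely that the image of $f^*\wedge^d\Omega^1_X\to\omega_Y$ is $\omega_Y(-\widehat{K}_{Y/X})$, whence $v_E(f^*\tilde\eta)\ge\hat{k}_E$; since $f^*\tilde\eta$ and $f^*s\cdot f^*\eta$ coincide as rational $d$-forms on $Y$ (they agree on the dense open $f^{-1}(X_{\mathrm{sm}})$), their $E$-valuations match and the desired inequality follows.

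Adding the two bounds yields $v_E(f^*(g\eta))\ge 0$ for every prime divisor $E$ of $Y$, so $f^*(g\eta)$ is a regular section of $\omega_Y$ and pushes forward to the required section of $\omega_X^{GR}$. The principal obstacle I anticipate is the input $\Jac_X\cdot\omega_X\subseteq\mathrm{image}(\wedge^d\Omega^1_X\to\omega_X)$ in the generality of an arbitrary reduced equidimensional $X$, together with the tacit requirement that sections of $\omega_X$ inject into the constant sheaf of rational $d$-forms so that the valuative computations on $Y$ are meaningful; both points are elementary for locally complete intersections, and in that case the same argument upgrades to the equality $\widehat{\sI}(\sO_X)\cdot\omega_X=\omega_X^{GR}$ mentioned in the introduction.
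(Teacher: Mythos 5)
Your valuative framework is sound, and once unwound it is essentially the divisorial shadow of the paper's own argument: the two bounds $v_E(f^*g)\ge j_E-\hat k_E$ and $v_E(f^*\eta)\ge \hat k_E-j_E$ are exactly what the paper's identities $\omega^{GR}_X=f_*\sO_{X'}(\widehat K_{X'/X}-F_0)\otimes\omega_{V_0}|_X$ and $\widehat{\sI}(q_0)=f_*\sO_{X'}(\widehat K_{X'/X}-J_{X'/X}-Q_0)$ encode, and the torsion-freeness of $\omega_X$ needed to read everything inside the sheaf of rational $d$-forms does hold for a reduced equidimensional $X$. The problem is the step you yourself flag as the principal obstacle: the inclusion $\Jac_X\cdot\omega_X\subseteq \im(\wedge^d\Omega^1_X\to\omega_X)$ is not an off-the-shelf consequence of the fundamental-class formalism; it is, in substance, the theorem itself, and the paper's entire proof is devoted to establishing it. What the Ein--Mustata/Kunz--Lipman theory gives readily is the statement for a \emph{single} generic link $V_0$: under the identification $\alpha_0:\omega_X\cong q_0$ the image of $c_X$ is $\Jac_0$, so your inclusion becomes $\Jac_X\cdot q_0\subseteq\Jac_0$, of which only the much weaker $\Jac_0\cdot q_0\subseteq\Jac_0$ is automatic. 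Since $J_{Y/X}$ is cut out by the full Jacobian $\Jac_X=\sum_J\Jac_J$, the element $s$ you choose with $v_E(f^*s)=j_E$ is in general a minor involving generators outside any fixed $J$, so $s\eta$ need not lift to $\wedge^d\Omega^1_X$ without the stronger inclusion; this is precisely where your second valuative bound is unproven.

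The paper closes this gap by a genericity-and-summation argument: choosing the generators $F_1,\dots,F_r$ of $I_X$ general, every $c$-element subset $J$ gives a complete-intersection link with its own isomorphism $\alpha_J:\omega_X\to q_J$, the composite $\alpha_J\circ\alpha_0^{-1}$ is multiplication by a fraction $b_J/a_J$ of nonzerodivisors, hence $q_J=\tfrac{b_J}{a_J}q_0$ and $\Jac_J=\tfrac{b_J}{a_J}\Jac_0$ simultaneously, and therefore $\Jac_X\cdot q_0=\sum_J\Jac_J\cdot q_0=\sum_J q_J\cdot\Jac_0\subseteq\Jac_0$. If you supply this lemma (or an equally precise substitute for the inclusion $\Jac_X\cdot\omega_X\subseteq\im(c_X)$ on an arbitrary reduced equidimensional variety), your proof is complete and is a clean repackaging of the paper's; as written, the crux is cited rather than proved. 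Your remark that the same argument upgrades to equality in the locally complete intersection case is correct and agrees with Remark \ref{rmk:01}(1).
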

\begin{proof} The question is local, so we can assume that $X$ is affine and embedded in an affine space $\nA^N$ such that $d:=\dim X$ and $c:=\codim_{\nA^N}X$. Assume that $I_X:=(F_1,F_2,\cdots,F_r)$. We can take those generators $F_i$'s general such that any $c$ of them provide a general link of $X$. Specifically, let $J\subset \{1,2,\cdots, r\}$ such that $|J|=c$. Then let $I_{V_J}$ to be the ideal generated by $F_i\in J$, i.e., $I_{V_J}:=\{F_i|i\in J\}$, and let $V_J$ be the subscheme defined by $I_{V_J}$. Since we take $F_i$'s general, $V_J$ is a complete intersection in $\nA^N$. Denote $q_J:=(I_{V_J}:I_X)\cdot\sO_X$ and $\Jac_J:=\Jac_{V_J}\cdot \sO_X$. Consider the following morphisms
$$\wedge^d\Omega^1_X\stackrel{c_X}{\longrightarrow}\omega_X\stackrel{u_J}{\longrightarrow}\omega_{V_J}|_X\stackrel{w_J}{\longrightarrow}\sO_X.$$
It has been proved in \cite{Ein:JetSch} that
\begin{itemize}
\item [(1)] $u_J$ is an injective and $w_J$ is a canonical isomorphism;
\item [(2)] the image of $u_J$ is $q_J\otimes \omega_{V_J}|_X$ and therefore if set $\alpha_J:=w_J\circ u_J$, we get an isomorphism $$\alpha_J:\omega_X\longrightarrow q_J;$$
\item [(3)] the image of $u_J\circ c_X$ is $\Jac_J\otimes \omega_{V_J}|_X$ and under the isomorphism $\alpha_J$ above the image of $c_X$ is $\Jac_J$.
\end{itemize}
Now consider $J_0:=\{1,2,\cdots, c\}$ and write $V_0:=V_{J_0}$, $\Jac_0:=\Jac_{J_0}$, and $q_0=q_{J_0}$. From the surjective morphism
$$\wedge^d\Omega^1_X\longrightarrow \Jac_0\otimes \omega_{V_0}|_X,$$
we deduce that the Nash blowup $\widehat{X}=\Bl_{\Jac_0}X$ and $\widehat{K}_X=\Jac_0\cdot\sO_{\widehat{X}}\otimes \sigma^*\omega_{V_J}|_X$, where $\sigma:\widehat{X}\rightarrow X$ is the projection. Consider a log resolution $f:X'\rightarrow X$ of $\Jac_X$, $\Jac_0$ and $q_0$ such that $\Jac_X\cdot\sO_{X'}=\sO_{X'}(-J_{X'/X})$,  $\Jac_0\cdot\sO_{X'}=\sO_{X'}(-F_0)$, and $q_0\cdot\sO_{X'}=\sO_{X'}(-Q_0)$. This $f$ factors through $\widehat{X}$ and therefore we have
$$\widehat{K}_{X'/X}=\omega_{X'}\otimes \sO_{X'}(F)\otimes f^*\omega_{V_0}^{-1}|_X,$$
which implies that
\begin{equation}
\omega^{GR}_X=f_*\sO_{X'}(\widehat{K}_{X'/X}-F_0)\otimes \omega_{V_0}|_X.
\end{equation}
Note that $f_*\sO_{X'}(\widehat{K}_{X'/X}-F_0)$ is an ideal sheaf because it is inside $\widehat{\sI}(\sO_X)$ since $\Jac_0\subseteq \Jac_X$.
On the other hand, we have the equality $\omega_X=q_0\otimes \omega_{V_0}|_X$. Hence we deduce that
$$(\omega^{GR}_X:\omega_X)=(f_*\sO_{X'}(\widehat{K}_{X'/X}-F_0)\otimes \omega_{V_0}|_X:q_0\otimes \omega_{V_0}|_X)=(f_*\sO_{X'}(\widehat{K}_{X'/X}-F_0):q_0),$$
since $\omega_{V_0}|_X$ is invertible.

In order to prove the theorem we need to show $\widehat{\sI}(\sO_X)\cdot q_0\subseteq f_*\sO_{X'}(\widehat{K}_{X'/X}-F_0)$. But note that $\widehat{\sI}(\sO_X)\cdot q_0\subseteq \widehat{\sI}(q_0)=f_*\sO_{X'}(\widehat{K}_{X'/X}-J_{X'/X}-Q_0)$. Thus it suffices to show
\begin{equation}\label{eq:06}
\Jac_X\cdot q_0\subseteq \Jac_0.
\end{equation}
To this end, consider for any $J\subset \{1,2,\cdots,r\}$ with $|J|=c$ the isomorphism
$$\alpha_J\circ\alpha^{-1}_0: q_0\longrightarrow q_J.$$
Since ideals $q_0$ and $q_J$ contain some nonzero divisors of $\sO_X$, the isomorphism $\alpha_J\circ\alpha^{-1}_0$ is given by $\alpha_J\circ\alpha^{-1}_0(r)=\frac{b_J}{a_J}\cdot r$ for any $r\in q_0$, where $b_J$ and $a_J$ are some nonzero divisors in $\sO_X$. Thus we have $q_J=\frac{b_J}{a_J}\cdot q_0$ and $\Jac_J=\frac{b_J}{a_J}\cdot \Jac_0$. But notice that
$$\Jac_X=\sum_J \Jac_J,$$
and therefore $$\Jac_X\cdot q_0=\sum_J \Jac_J\cdot q_0=\sum_J \frac{b_J}{a_J}\cdot \Jac_0\cdot q_0=\sum_J \frac{b_J}{a_J}\cdot q_0\cdot \Jac_0=\sum_J q_J\cdot \Jac_0\subseteq \Jac_0.$$
This proves the inclusion (\ref{eq:06}) and the theorem then follows.
\end{proof}

\begin{remark}\label{rmk:01} (1) When $X$ is a locally complete intersection, the image of the canonical morphism $\wedge^d\Omega_X\longrightarrow \omega_X$
is $\Jac_X\otimes \omega_X$. Then following the same argument as above, one can deduce that
$$\widehat{\sI}(\sO_X)\cdot\omega_X=\omega^{GR}_X.$$
This equality was also mentioned explicitly or implicitly in work of \cite{Ein:MultIdeaMatherDis} and \cite{Roi:JDiscrepancy}.

(2) In their work \cite{Roi:JDiscrepancy}, when $X$ is normal, de Fernex-Docampo established that
$$\sI^{\diamond}(\mathfrak{d}^{-1}_X)\cdot \omega_X\subseteq\omega^{GR}_X,$$
where the ideal $\sI^{\diamond}(\mathfrak{d}^{-1}_X)$ is a special multiplier ideal defined on normal varieties. It is not clear to us right now that this ideal can be extended to any variety. It would be interesting if a similar result on any variety can be established. But so far it seems too strong to expect.
\end{remark}

\begin{example} It is easy to see that in general we do not have equality $\widehat{\sI}(\sO_X)\cdot\omega_X=\omega^{GR}_X$ for any variety. Indeed, we can take $X$ which has rational singularities but not MJ-canonical. Then $\omega_X=\omega^{GR}_X$ but $\widehat{\sI}(\sO_X)$ is not trivial.
\end{example}

\section{Mather-Jacobian multiplier ideals on curves}

\noindent In this section, we study MJ-multiplier ideals on an algebraic curve. We assume that the ground field $k$ is of {\em any characteristic}. We shall first prove that the definition of MJ-multiplier ideals still works for such ground field $k$.

We start with briefly recalling some facts about a general Noether normalization. Assume that $X=\Spec R$ is an affine variety of dimension $d$ and $R=k[x_1,\cdots,x_n]$ is a finitely generated $k$-algebra generated by $x_1,\cdots,x_n$. We can choose $x_i$'s general (for example, we take the form $a_1x_1+\cdots+a_nx_n$, where $a_i$'s are general elements in the ground field $k$) such that any $d$ of them, say $x_{j_1},\cdots, x_{j_d}$, will give a Noether normalization $k[x_{j_1},\cdots, x_{j_d}]\subseteq k[x_1,\cdots,x_n]$ and the total ring of quotients $Q(R)$ is \'etale over $k(x_{j_1},\cdots, x_{j_d})$. The following proposition is well-known to the experts and can be easily proved by doing calculation.

\begin{proposition}\label{p:03} Let $X=\Spec R$ be an affine variety of dimension $d$ over $k$, where $R=k[x_1,\cdots,x_n]$, and let $X'$ be the normalization of $X$. For any $J\subset \{1,\cdots,n\}$ with $|J|=d$, write $\textbf{x}_J=\{x_j\ | \ j\in J\}$ and $A_J:=\Spec k[\textbf{x}_J]$ and consider the following diagram

$$\xymatrix{
X' \ar[rr]\ar[dr] &   & X \ar[dl]\\
                         & A_J  &
}$$
Then we can take the generators $x_i$'s to be general such that we have
\begin{equation}\label{eq:13}
\Jac_X=\sum_{J}\Jac(X/A_J),\quad \mbox{and}\quad \Jac(X'/X)=\sum_{J}\Jac(X'/A_J),
\end{equation}
where $\Jac(X'/X)$, $\Jac(X/A_J)$ and $\Jac(X'/A_J)$ are relative Jacobian ideals in the diagram.
\end{proposition}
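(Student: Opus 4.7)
The plan is to interpret each of the four ideals appearing in (\ref{eq:13}) as the zeroth Fitting ideal of an appropriate module of K\"ahler differentials, and then reduce both equalities to combinatorial identities among minors of a single Jacobian matrix.

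First I would fix a presentation $R = k[y_1,\ldots,y_n]/I$ with $I = (f_1,\ldots,f_m)$, so that $x_i$ is the image of $y_i$. The genericity assumption on the $x_i$'s ensures that for every $d$-subset $J$, the inclusion $k[\mathbf{x}_J] \hookrightarrow R$ realizes a Noether normalization and the finite field extension $k(\mathbf{x}_J) \subseteq Q(R)$ is separable. Separability is automatic in characteristic zero, but in positive characteristic the general choice of the $x_i$'s is essential: it guarantees that each of $\Omega^1_{X/A_J}$ and $\Omega^1_{X'/A_J}$ is a torsion module whose zeroth Fitting ideal computes the corresponding relative Jacobian.

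For the first equality, the conormal sequence yields the presentation
$$ R^m \xrightarrow{\mathbf{A}} R^n \longrightarrow \Omega^1_{X/k} \longrightarrow 0, $$
where $\mathbf{A} = (\partial f_i/\partial y_j)$ is the Jacobian matrix; hence $\Jac_X$ is the ideal of $(n-d) \times (n-d)$ minors of $\mathbf{A}$. For each $J$, the relative cotangent sequence presents $\Omega^1_{X/A_J}$ by the submatrix of $\mathbf{A}$ obtained by deleting the $d$ columns indexed by $J$, so $\Jac(X/A_J)$ is generated by exactly those $(n-d) \times (n-d)$ minors of $\mathbf{A}$ that use columns in $\{1,\ldots,n\} \setminus J$. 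As $J$ ranges over all $d$-subsets, these complementary column sets exhaust all $(n-d)$-subsets of $\{1,\ldots,n\}$, giving $\Jac_X = \sum_J \Jac(X/A_J)$.

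For the second equality, I would work locally at a point $p \in X'$ where $X'$ is smooth; in the curve setting, which is this section's context, $X'$ is smooth everywhere. Choose a regular system of parameters $t_1,\ldots,t_d$ at $p$ and write $dx_i = \sum_k w_{ik}\, dt_k$, assembling the $n \times d$ matrix $W = (w_{ik})$. The exact sequence
$$ \mu^{\ast}\Omega^1_{X/k} \longrightarrow \Omega^1_{X'/k} \longrightarrow \Omega^1_{X'/X} \longrightarrow 0 $$
then yields a local presentation $\sO_{X',p}^{\,n} \xrightarrow{W} \sO_{X',p}^{\,d} \to \Omega^1_{X'/X,p} \to 0$, so $\Jac(X'/X)$ at $p$ is generated by the $d \times d$ minors of $W$. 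Similarly, $\Omega^1_{X'/A_J}$ is presented locally by the $d \times d$ submatrix of $W$ formed by the rows indexed by $J$, whose determinant generates $\Jac(X'/A_J)$ at $p$. Summing over all row-subsets $J$ of size $d$ reproduces precisely the ideal of $d \times d$ minors of $W$, establishing the second identity stalk by stalk.

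The main obstacle is justifying the separability of $Q(R)$ over $k(\mathbf{x}_J)$ in positive characteristic; this is exactly what the generic choice of the $x_i$'s secures, and without it some morphism $X \to A_J$ could be generically inseparable, making the Fitting ideal interpretation of the corresponding summand degenerate. Beyond the curve case one would additionally need to control the Fitting ideals at possible non-smooth points of $X'$, but this subtlety does not arise in the present setting.
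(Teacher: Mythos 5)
The paper supplies no argument for this proposition at all: it is introduced with the remark that it ``is well-known to the experts and can be easily proved by doing calculation,'' so there is no proof of record to compare yours against. Your calculation is the natural one and it is correct where the paper actually uses the statement. Writing $\Jac_X=\Fitt^d(\Omega^1_{X/k})$ and $\Jac(X/A_J)=\Fitt^0(\Omega^1_{X/A_J})$, the conormal and relative cotangent sequences identify the first as the ideal of all $(n-d)\times(n-d)$ minors of the Jacobian matrix and the second as the ideal of those minors supported on the columns complementary to $J$; letting $J$ run over all $d$-subsets exhausts all column choices, which gives the first identity with no genericity needed beyond making $k[\textbf{x}_J]\subseteq R$ an actual Noether normalization (and, in positive characteristic, generically \'etale, which is what makes ``relative Jacobian ideal'' mean $\Fitt^0$ of a torsion module). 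The same row-versus-column bookkeeping on the matrix $W$ expressing the $dx_i$ in a local basis of $\Omega^1_{X'/k}$ gives the second identity. The one point to be explicit about is that this second computation needs $\Omega^1_{X'/k}$ to be locally free of rank $d$, i.e.\ $X'$ smooth; this is automatic for curves over an algebraically closed field, which is the only case in which the paper invokes the proposition (Proposition \ref{p:11}), but for the statement as written in arbitrary dimension the normalization may be singular and your presentation of $\Omega^1_{X'/X}$ by $W$ alone breaks down. You flag this yourself, and it does not affect the paper's use of the result, but if you wanted the full generality of the stated proposition you would have to splice $W$ with a presentation matrix of $\Omega^1_{X'/k}$ and control the minors meeting more than $d$ of the new columns.
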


Now let $X$ be an algebraic curve over $k$ and let $f:X'\longrightarrow X$ be the normalization of $X$. Let $\Jac(X/k)$ be the Jacobian ideal of $X$ and we can write $\Jac(X/k)\cdot\sO_{X'}=\sO_{X'}(-J_{X'/X})$ for an effective divisor $J_{X'/X}$ on $X'$. Let $\Jac(X'/X)$ be the relative Jacobian ideal of $f$ and clearly we have $\Jac(X'/X)=\sO_{X'}(-\widehat{K}_{X'/X})$. For an ideal  $\mf{a}$ of $\sO_X$ and we write $\mf{a}\cdot\sO_{X'}=\sO_{X'}(-Z)$ where $Z$ is an effective divisor on $X'$. Then formally we can form an Mather-Jacobian multiplier ideal of $\mf{a}$ of exponent $t\in \nR_{\geq 0}$ by
$$\widehat{\sI}(X,\mf{a}^t):=f_*\sO_Y(\widehat{K}_{Y/X}-J_{Y/X}-\lfloor tZ\rfloor).$$
The crucial point is that we need to show this fraction ideal sheaf $\widehat{\sI}(X,\mf{a}^t)$ is indeed inside $\sO_X$ if $k$ has any characteristic. When $k$ has characteristic zero, it is proved in \cite{Ein:MultIdeaMatherDis}. Here we show it in the following proposition for $k$ of any characteristic. Note that it is enough to show that $\widehat{\sI}(\sO_X)$ is an ideal of $\sO_X$.

\begin{proposition}\label{p:11} As setting above, $\widehat{\sI}(\sO_X)$ is inside $\sO_X$.
\end{proposition}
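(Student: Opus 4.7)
The plan is to adapt the characteristic-zero argument of \cite[Corollary 2.14]{Ein:MultIdeaMatherDis} to arbitrary characteristic by exploiting the curve-case features that (i) the normalization $f: X' \to X$ already serves as a resolution of singularities, (ii) finiteness of $f$ makes Grauert--Riemenschneider-style vanishing trivial ($R^i f_* = 0$ for $i > 0$), and (iii) the generic Noether normalization of Proposition \ref{p:03} gives a chain-rule decomposition of the Jacobian ideal.

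The statement is local on $X$, so I may assume $X = \Spec R$ is affine with $R = k[x_1, \ldots, x_n]/I$. Choosing the $x_j$'s in sufficiently general position so that Proposition \ref{p:03} applies, we have $\Jac(X/k) = \sum_j \Jac(X/A_j)$ and $\Jac(X'/X) = \sum_j \Jac(X'/A_j)$, with $A_j = \Spec k[x_j]$ and $\pi_j: X \to A_j$ the generically étale Noether normalizations. For each $j$, the composition $\pi_j' := \pi_j \circ f : X' \to A_j$ is a finite flat generically étale morphism between smooth curves, and I would first establish the chain-rule identity
$$
\widehat{K}_{X'/A_j} = \widehat{K}_{X'/X} + \udiv\bigl(\Jac(X/A_j)\cdot\sO_{X'}\bigr),
$$
where $\widehat{K}_{X'/A_j}$ denotes the ramification divisor of $\pi_j'$. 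This comes from the right-exact Kähler sequence $f^*\Omega^1_{X/A_j}\to\Omega^1_{X'/A_j}\to\Omega^1_{X'/X}\to 0$ by computing $0$-th Fitting ideals on the smooth curve $X'$, together with the genericity of the Noether normalization which controls the kernel of the first map. Summing over $j$ and rewriting $J_{X'/X}$ via Proposition \ref{p:03}, one obtains
$$
\widehat{K}_{X'/X}-J_{X'/X}=\max_j\bigl(2\widehat{K}_{X'/X}-\widehat{K}_{X'/A_j}\bigr),
\qquad
\widehat{\sI}(\sO_X)=\sum_{j=1}^n f_*\sO_{X'}\bigl(2\widehat{K}_{X'/X}-\widehat{K}_{X'/A_j}\bigr),
$$
so it suffices to show $f_*\sO_{X'}(2\widehat{K}_{X'/X}-\widehat{K}_{X'/A_j})\subseteq\sO_X$ for each fixed~$j$.

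For each such $j$ the single-projection inclusion only involves the finite flat morphism $\pi_j': X' \to A_j$ through the smooth base $A_j$, and I would handle it by the trace/duality formalism: identifying $\omega_{X'/A_j}\cong\sO_{X'}(\widehat{K}_{X'/A_j})$ and $\omega_{X/A_j}$ as fractional ideals inside $K(X')=K(X)$ via the respective traces, together with the transitivity $\omega_{X'/A_j}\cong\omega_{X'/X}\otimes f^*\omega_{X/A_j}$, produces a divisor comparison on $X'$ that lands the summand inside $\sO_X\subseteq f_*\sO_{X'}$. The main obstacle I expect is exactly this last step: in positive characteristic wild ramification can interfere with the trace map, and one must verify carefully that the divisor-theoretic computation on the smooth curve $X'$ translates into genuine regularity of the resulting rational functions on the singular curve $X$ (rather than only integrality over $\sO_X$). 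The generic choice of the Noether normalizations $\pi_j$ and the passage to a sum over all $j$ are precisely what allow the argument to proceed characteristic-freely, echoing the role these choices played in Proposition \ref{p:03}.
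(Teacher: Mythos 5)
Your overall strategy coincides with the paper's: localize, use the generic Noether normalization decomposition of Proposition \ref{p:03}, and reduce the containment $\widehat{\sI}(\sO_X)\subseteq\sO_X$ to a statement about a single projection $X'\to X\to A_J$. But the proof has a genuine gap at exactly the point where all the content lies. The single-projection inclusion, which you defer to ``the trace/duality formalism'' and then yourself identify as ``the main obstacle,'' is not a routine verification to be carried out later: it is precisely the theorem of Lipman--Sathaye \cite[Theorem 2]{Lipman:JacBrianSkoda}, which asserts $(S:\Jac(X'/A_J))\cdot\Jac(X/A_J)\subseteq\mf{C}_X$ for a generically separable Noether normalization, in any characteristic (here $X'=\Spec S$ is the normalization). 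The paper's proof is essentially your reduction plus this citation, combined with the easy observations that $(S:\Jac(X'/X))\subseteq(S:\Jac(X'/A_J))$ (since $\Jac(X'/A_J)\subseteq\Jac(X'/X)=\sum_J\Jac(X'/A_J)$) and $\mf{C}_X\subseteq R$. In your write-up the key inclusion is neither proved nor cited, so the argument is incomplete; your worry about wild ramification is exactly the issue that \cite{Lipman:JacBrianSkoda} (or the complementary-module formalism of \cite{Kunz:ResDuality08}) resolves using only generic separability, which the generic choice of the $x_i$'s guarantees.

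A secondary issue: your ``chain-rule identity'' $\widehat{K}_{X'/A_J}=\widehat{K}_{X'/X}+\udiv(\Jac(X/A_J)\cdot\sO_{X'})$ amounts to multiplicativity of zeroth Fitting ideals across the right-exact sequence $f^*\Omega^1_{X/A_J}\to\Omega^1_{X'/A_J}\to\Omega^1_{X'/X}\to 0$, and Fitting ideals are not multiplicative in exact sequences in general; this step would also need an argument. The paper sidesteps it entirely: writing $\widehat{\sI}(\sO_X)=(S:\Jac(X'/X))\cdot\Jac(X/k)$ and expanding $\Jac(X/k)=\sum_J\Jac(X/A_J)$ already expresses the multiplier ideal as a sum of terms, each contained in $(S:\Jac(X'/A_J))\cdot\Jac(X/A_J)$, with no transitivity of differents required. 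I recommend following that route and quoting Lipman--Sathaye for the decisive containment.
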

\begin{proof} The question is local, so we assume that $X=\Spec R$ be an affine where $R=k[x_1,\cdots,x_n]$ and $x_i$'s are generators of $k$-algebra. We assume that the normalization $X'=\Spec S$. By Proposition \ref{p:03}, we can choose $x_i$'s general so that we have equalities in (\ref{eq:13}). All we need is to show $(S:\Jac(X'/X))\cdot \Jac(X/k)$ is an $R$-ideal. To this end, for a Noether normalization $A_J$ as in Proposition \ref{p:03}, by \cite[Theorem 2]{Lipman:JacBrianSkoda}, we see that
$$(S:\Jac(X'/A_J))\cdot\Jac(X/A_J)\subseteq \mf{C}_X,$$
where $\mf{C}_X$ is the conductor ideal of $X$. Now by (\ref{eq:13}) and notice that $(S:\Jac(X'/X))\subseteq(S:\Jac(X'/A_J))$ we see that
$$(S:\Jac(X'/X))\cdot\Jac(X/k)\subseteq \mf{C}_X.$$
Finally since the conductor ideal $\mf{C}_X$ is inside $R$ so the result follows.
\end{proof}

\begin{remark} When $\Char k=0$, \cite{Ein:MultIdeaMatherDis} has showed that $\widehat{\sI}(\sO_X)\subseteq \sO_X$ by reducing to the usual multiplier ideals on a nonsingular variety. From it, we proved Proposition \ref{p:10}. This method requires the existence of  resolution of singularities.

If $\Char k>0$, although we have proved the above proposition for curves, it is not clear to us how to extend the theory of MJ-multiplier ideal on varieties of any dimension. Hopefully, the above proposition could provide some evidence along this direction.
\end{remark}

\begin{theorem}\label{p:01} Let $X$ be an algebraic curve over $k$ and $\mf{a}\subseteq\sO_X$ be an ideal. Then $\mf{a}$ is a $MJ$-multiplier ideal if and only if $\mf{a}$ is integrally closed and $\mf{a}\subseteq\widehat{\sI}(\sO_X)$.
\end{theorem}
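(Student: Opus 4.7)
My plan is to work locally at each singular point $p\in X$ via the normalization $f:X'\to X$, which for curves already plays the role of a log resolution (Remark \ref{rmk:02}(2)). Let $R=\sO_{X,p}$, let $S$ be its normalization (a semi-local principal ideal domain with maximal ideals indexed by the preimages $p_1,\dots,p_s$ of $p$), and write $v_i=\ord_{p_i}$. Set $\widehat{K}_{X'/X}=\sum k_i[p_i]$, $J_{X'/X}=\sum j_i[p_i]$, and let $C=\sum c_i[p_i]$ be the divisor on $X'$ of the conductor ideal $\mf{C}_X$ (so $\mf{C}_X=\sO_{X'}(-C)$ as an $S$-ideal, contained in $R$). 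Two structural facts will drive the argument: (i) the only Rees valuations of an $R$-ideal are the $v_i$, so every integrally closed $R$-ideal has the form $\{r\in R:v_i(r)\geq b_i\text{ for all } i\}$, and its divisor on $X'$ is $\sum b_i[p_i]$; (ii) any $z\in\nZ_{\geq 0}^s$ with $z\geq C$ componentwise is realized as $(v_1(r_0),\dots,v_s(r_0))$ for some $r_0\in R$, because the Chinese Remainder Theorem in the semi-local PID $S$ produces an element with prescribed valuations, which then lies in $\mf{C}_X\subseteq R$ automatically.

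For the forward direction, if $\mf{a}=\widehat{\sI}(X,\mf{b}^t)$ with $\mf{b}\sO_{X'}=\sO_{X'}(-Z)$, then directly from the definition
$$\mf{a}_p=f_*\sO_{X'}\bigl(\widehat{K}_{X'/X}-J_{X'/X}-\lfloor tZ\rfloor\bigr)=\{r\in R:v_i(r)\geq j_i-k_i+\lfloor tz_i\rfloor\text{ for all } i\},$$
which is an intersection of valuation ideals and hence integrally closed; moreover $\lfloor tZ\rfloor\geq 0$ gives $\widehat{K}_{X'/X}-J_{X'/X}-\lfloor tZ\rfloor\leq\widehat{K}_{X'/X}-J_{X'/X}$, which pushes forward to $\mf{a}\subseteq\widehat{\sI}(\sO_X)$.

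For the backward direction, assume $\mf{a}$ is integrally closed and $\mf{a}\subseteq\widehat{\sI}(\sO_X)$. By (i), $\mf{a}_p=\{r\in R:v_i(r)\geq b_i\}$ with $b_i\geq j_i-k_i$; set $\delta_i:=b_i-j_i+k_i\geq 0$. The task is to exhibit an ideal $\mf{b}$ of divisor $Z$ and a rational exponent $t$ with $\lfloor tZ\rfloor=\delta$. The naive try $Z=\delta$, $t=1$ is blocked because $\delta$ may fail to lie in the value semigroup of $R$. The remedy is an inflation trick: choose $N\in\nZ_{>0}$ with $N>\max_i c_i$, pick integers $z_i\in[\max(N\delta_i,c_i),\,N(\delta_i+1)-1]$ (nonempty by the choice of $N$), and let $Z=\sum z_i[p_i]$. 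Since $z\geq C$, fact (ii) gives $r_0\in\mf{C}_X\subseteq R$ with $v_i(r_0)=z_i$ for all $i$, so $\mf{b}:=\{r\in R:v_i(r)\geq z_i\text{ for all } i\}$ is a nonzero integrally closed $R$-ideal, and the presence of $r_0$ in $\mf{b}$ forces $\mf{b}\sO_{X'}=\sO_{X'}(-Z)$. With $t=1/N$ the bounds on $z_i$ yield $\lfloor z_i/N\rfloor=\delta_i$, whence
$$\widehat{\sI}(X,\mf{b}^{1/N})=f_*\sO_{X'}(\widehat{K}_{X'/X}-J_{X'/X}-\delta)=f_*\sO_{X'}(-B),$$
which is contained in $\widehat{\sI}(\sO_X)\subseteq\sO_X$ by Proposition \ref{p:11}, and viewed as an $R$-ideal it equals $\{r\in R:v_i(r)\geq b_i\}=\mf{a}$. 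The finiteness of the singular locus of the curve lets me patch these local constructions into a single global ideal $\mf{b}\subseteq\sO_X$ with common exponent $1/N$. The main obstacle is precisely the possible failure of $\delta$ to be the divisor of any $R$-ideal; overcoming it requires the non-integer exponent $t=1/N$ so that the floor function absorbs the perturbation, combined with the CRT-in-the-conductor argument that guarantees the inflated ideal $\mf{b}$ really exists.
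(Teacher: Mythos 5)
Your proof is correct and follows essentially the same route as the paper's: both reduce to the normalization (as the log resolution), use that an integrally closed ideal contained in $\widehat{\sI}(\sO_X)$ is the contraction/pushforward of an invertible ideal on $X'$, and then recover the required discrepancy divisor as $\lfloor Z/N\rfloor$ for a suitably inflated divisor $Z$ with exponent $1/N$. The only differences are implementational: you guarantee that $\mf{b}=f_*\sO_{X'}(-Z)$ lands in $\sO_X$ and satisfies $\mf{b}\cdot\sO_{X'}=\sO_{X'}(-Z)$ by forcing $Z\geq C$ and invoking CRT in the semi-local principal normalization, whereas the paper forces $B'\geq E$ (via the extra terms $a_jE_j$ where $r_j+a_j=0$) and uses the surjectivity of $f^*f_*\sO_{X'}(-B')\to\sO_{X'}(-B')$ for the finite affine morphism $f$.
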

\begin{proof} The necessary part is clear by the definition of Mather-Jacobian multiplier ideals. So we prove the sufficient part by assuming that $\mf{a}$ is integrally closed and $\mf{a}\subseteq\widehat{\sI}(\sO_X)$.

Let $f:X'\longrightarrow X$ be the normalization of $X$. We write $\mf{a}\cdot\sO_{X'}=\sO_{X'}(-E)$ and decompose $E$ as $$E=\sum_{i=1}^ta_iE_i,$$ where $E_i$ are distinct prime divisors and $a_i>0 $.
Let $\widehat{K}_{X'/X}$ be the Mather discrepancy divisor. Let $\Jac_X$ be the Jacobian ideal of $X$, and write $\Jac_X\cdot\sO_{X'}=\sO_{X'}(-J_{X'/X})$. By assumption of $\mf{a}\subseteq \widehat{\sI}(\sO_X)$, we have an inequality
\begin{equation}\label{eq:12}
-E\leq \widehat{K}_{X'/X}-J_{X'/X},
\end{equation}
Furthermore, since $\mf{a}$ is integrally closed and is contained in $\widehat{\sI}(\sO_X)$, we have $\mf{a}=f_*\sO_{X'}(-E)$.

Thus we can rewrite
$$\widehat{K}_{X'/X}-J_{X'/X}=\sum_{i=1}^tr_iE_i+N, \ \ r_i\in \nZ,$$
where no $E_i$ is in the support of $N$ and $N$ must be effective because of (\ref{eq:12}). Now set
$$B:=\sum_{i=1}^t(r_i+a_i)E_i+N$$
Note that for any $E_i$, we have
$r_i+a_i\geq 0$ because of (\ref{eq:12}) and therefore $B$ is an effective divisor.
Consider a divisor
$$T:=\widehat{K}_{X'/X}-J_{X'/X}-B.$$
Thus $T=-E$ and therefore we see that $f_*\sO_{X'}(T)=f_*\sO_{X'}(-E)=\mf{a}$.

Next we shall find an ideal $\mf{b}$ and a number $\lambda$ such that $\mf{b}\cdot\sO_{X'}=\sO_{X'}(-B')$ and $B=\lfloor \lambda B'\rfloor$. Once we have this $\mf{b}$ and we immediately have $\mf{a}=\widehat{\sI}(b^{\lambda})$. Our strategy is that we construct $B'$ first and then push it down to get $\mf{b}$. For this, set
$$B'=nB+\sum_{j:r_j+a_j=0}a_jE_j.$$
We fix a number $n\gg0$ so that $n>a_i$ for all $i=1,...,t$. Now for such divisor $B'$, we have
$$\sO_{X'}(-B')\subseteq\sO_{X'}(-\sum_{i=1}^ta_iE_i).$$
This implies that
$$f_*\sO_{X'}(-B')\subseteq f_*\sO_{X'}(-E)=f_*\sO_{X'}(T)\subseteq \sO_X.$$
Thus we set $\mf{b}:=f_*\sO_{X'}(-B')$, which is an ideal. Now since $f$ is an affine finite morphism, we then get the surjection
\begin{equation}\label{eq:02}
f^*f_*\sO_{X'}(-B')\longrightarrow \sO_{X'}(-B')\longrightarrow 0.
\end{equation}
Thus we see that $\mf{b}\cdot\sO_{X'}=\sO_{X'}(-B')$. Finally, we set $\lambda=\frac{1}{n}$, and notice that
$B=\lfloor\lambda B'\rfloor$ and therefore we obtain $\mf{a}=\widehat{\sI}(\mf{b}^\lambda)$.
\end{proof}

\begin{remark} (1) Note that the theorem is global, i.e., $X$ need not to be affine. The similar result is certainly not true for higher dimensional variety in general by the result of Lazarsfeld-Lee \cite{Lazarsfeld:LocalSyzyMultiplier}. The essential point in the curve case is the surjectivity (\ref{eq:02}).

(2) However, for surface case, in the light of the work of Lipman-Watanabe \cite{Lipman:IntClosedIdealMultipler}, Favre-Jonsson \cite{Favre:ValMultiplierIdeals}, and Tucker \cite{Tucker:IntIdealMulitplier}, we may expect the following conjecture.
\end{remark}

\begin{conjecture}\label{p:04} Let $X$ be an algebraic surface over $k$ and $\mf{a}\subseteq\sO_X$ be an ideal. Let $\mf{p}\in X$ be a closed point. Then $\mf{a}_{\mf{p}}$ is a MJ-multiplier ideal at $\mf{p}$ if and only if $\mf{a}_{\mf{p}}$ is integrally closed and $\mf{a}_{\mf{p}}\subseteq\widehat{\sI}(\sO_X)_{\mf{p}}$.
\end{conjecture}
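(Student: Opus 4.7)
The plan is to localize at $\mf{p}$ and to imitate the proof of Theorem \ref{p:01}, replacing the normalization $f\colon X'\to X$ by a log resolution $g\colon Y\to X$ of the ideal $\mf{a}\cdot \Jac_X$ that also factors through the Nash blow-up. Write $\mf{a}\cdot\sO_Y=\sO_Y(-E)$ and $\Jac_X\cdot\sO_Y=\sO_Y(-J_{Y/X})$. The hypothesis $\mf{a}_{\mf{p}}\subseteq\widehat{\sI}(\sO_X)_{\mf{p}}$ yields the inequality $-E\leq \widehat{K}_{Y/X}-J_{Y/X}$ on the support of $E$, so that $B:=\widehat{K}_{Y/X}-J_{Y/X}+E$ is effective. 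Since $\mf{a}$ is integrally closed and $g$ dominates the normalized blow-up of $\mf{a}$, one has $\mf{a}=g_*\sO_Y(-E)=g_*\sO_Y(\widehat{K}_{Y/X}-J_{Y/X}-B)$.

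Following the curve template, for a large integer $n$ set
$$B':= nB+\sum_{j:r_j+a_j=0}a_jE_j,$$
in the notation of the proof of Theorem \ref{p:01}, so that $\lfloor B'/n\rfloor=B$ and every component of $E$ appears nontrivially in $B'$. Define $\mf{b}:=g_*\sO_Y(-B')$; just as in the curve case, $\mf{b}$ is an ideal of $\sO_X$. If we can show that the natural map $g^*g_*\sO_Y(-B')\to \sO_Y(-B')$ is surjective, so that $\mf{b}\cdot\sO_Y=\sO_Y(-B')$, then setting $\lambda:=1/n$ gives $\mf{a}_{\mf{p}}=\widehat{\sI}(X,\mf{b}^{\lambda})_{\mf{p}}$, and the conjecture follows.

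The main obstacle, and the whole reason the argument does not copy over verbatim from dimension one, is establishing this surjectivity: on a curve it came for free because $g$ was finite, but on a surface it is equivalent to $\sO_Y(-B')$ being $g$-globally generated, which by the classical surface theory holds if and only if $B'$ is $g$-antinef, i.e., $B'\cdot C\leq 0$ for every $g$-exceptional curve $C$. The integral closure of $\mf{a}$ guarantees, after choosing $g$ to dominate the normalized blow-up of $\mf{a}$, that $E$ itself is $g$-antinef. The delicate point is to show that $\widehat{K}_{Y/X}-J_{Y/X}$ can be made $g$-antinef by further point blow-ups of $Y$, and that the added correction terms in $B'$ do not destroy antinef-ness. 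This is precisely where one would need to extend the methods of Lipman-Watanabe \cite{Lipman:IntClosedIdealMultipler}, Favre-Jonsson \cite{Favre:ValMultiplierIdeals}, and Tucker \cite{Tucker:IntIdealMulitplier} into the Mather-Jacobian setting on arbitrary singular surfaces, which will require a careful analysis of the intersection numbers $\widehat{K}_{Y/X}\cdot C$ and $J_{Y/X}\cdot C$ against curves $C$ contracted to singular points of $X$. This is the hard part, and without such an extension the strategy stalls exactly at the surjectivity step.
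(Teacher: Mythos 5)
This statement is stated in the paper as Conjecture~\ref{p:04} and is \emph{not} proved there; the authors only offer it as an expectation motivated by the results of Lipman--Watanabe, Favre--Jonsson, and Tucker in the classical multiplier-ideal setting. Your proposal is therefore not being measured against a hidden argument in the paper --- there is none --- and, as you yourself acknowledge in the final paragraph, what you have written is a strategy that stalls rather than a proof. The early steps are fine: the containment $\mf{a}_{\mf{p}}\subseteq\widehat{\sI}(\sO_X)_{\mf{p}}$ does give $-E\leq \widehat{K}_{Y/X}-J_{Y/X}$ as divisors (not merely on the support of $E$, which is in fact what you need for $B$ to be effective), and integral closedness gives $\mf{a}=g_*\sO_Y(-E)$ once $g$ dominates the normalized blow-up of $\mf{a}$.

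The genuine gap is exactly where you locate it, and it is the entire content of the conjecture: on a surface the natural map $g^*g_*\sO_Y(-B')\to\sO_Y(-B')$ need not be surjective, so $\mf{b}\cdot\sO_Y$ is $\sO_Y(-B'')$ for the $g$-antinef closure $B''\geq B'$ rather than $\sO_Y(-B')$ itself, and one must then show that $B''$ can be arranged to satisfy $\lfloor B''/n\rfloor=B$. In the smooth case this is the heart of the Lipman--Watanabe and Favre--Jonsson arguments, and it relies on precise intersection-theoretic control of $K_{Y/X}$ against exceptional curves; in the Mather--Jacobian setting one would need the analogous control of $(\widehat{K}_{Y/X}-J_{Y/X})\cdot C$ for curves $C$ contracted to singular points of $X$, which is not established anywhere and is the reason the statement remains open. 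Note also that by Lazarsfeld--Lee the analogous claim fails in dimension at least three, so any completion of your argument must exploit the surface hypothesis in an essential way at precisely this step; a proof that does not visibly do so should be regarded with suspicion. In short: correct identification of the right template (the proof of Theorem~\ref{p:01}) and of the obstruction, but no proof.
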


Next we prove Theorem \ref{p:02}. Let us first mention the following easy lemma used in the proof.

\begin{lemma}\label{p:09} Let $(R,\mf{m})$ be a Cohen-Macaulay local ring of dimension one with a canonical module $\omega_R$. Let $M$ be a finite torsion-free $R$-module and $N$ be a submodule of $M$ such that the length $\lambda(M/N)$ if finite. Then one has
$$\lambda(M/N)=\lambda(\Hom_R(N,\omega_R)/\Hom_R(M,\omega_R)).$$
\end{lemma}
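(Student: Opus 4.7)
The plan is to apply $\Hom_R(-,\omega_R)$ to the short exact sequence $0\to N\to M\to M/N\to 0$ and exploit duality for the canonical module in dimension one, reading the desired length equality off the resulting long exact sequence of $\Ext$'s.

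First I would note that $M$ and $N$ are maximal Cohen--Macaulay. Since $R$ is Cohen--Macaulay of dimension one, it is unmixed, so every nonzerodivisor of $R$ is regular on any finitely generated torsion-free $R$-module; hence $\depth_R M=\depth_R N=1=\dim R$ and both $M$ and $N$ are maximal Cohen--Macaulay.

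Next I would invoke two standard properties of $\omega_R$. For a maximal Cohen--Macaulay module $P$ one has $\Ext^i_R(P,\omega_R)=0$ for all $i>0$, so in particular $\Ext^1_R(M,\omega_R)=\Ext^1_R(N,\omega_R)=0$. For a finite length module $L$, on the other hand, $\Hom_R(L,\omega_R)=0$ (since $L$ is $\mf{m}$-torsion while $\omega_R$ has positive depth), and local duality in dimension one identifies $\Ext^1_R(L,\omega_R)$ with the Matlis dual of $L$, so $\lambda(\Ext^1_R(L,\omega_R))=\lambda(L)$.

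Applying these to the long exact $\Ext$-sequence of $0\to N\to M\to M/N\to 0$, the terms $\Hom_R(M/N,\omega_R)$ and $\Ext^1_R(M,\omega_R)$ vanish and one is left with
$$0\to \Hom_R(M,\omega_R)\to \Hom_R(N,\omega_R)\to \Ext^1_R(M/N,\omega_R)\to 0.$$
Taking lengths and using $\lambda(\Ext^1_R(M/N,\omega_R))=\lambda(M/N)$ then proves the lemma. The only nonformal ingredient is the length identity $\lambda(\Ext^1_R(L,\omega_R))=\lambda(L)$ coming from local duality, which is standard, so no serious obstacle is expected; the argument is essentially a depth chase combined with local duality.
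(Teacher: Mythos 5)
Your argument is correct and is essentially the paper's own proof: both apply $\Hom_R(-,\omega_R)$ to $0\to N\to M\to M/N\to 0$, extract the short exact sequence $0\to\Hom_R(M,\omega_R)\to\Hom_R(N,\omega_R)\to\Ext^1_R(M/N,\omega_R)\to 0$, and conclude via local/Matlis duality that $\lambda(\Ext^1_R(M/N,\omega_R))=\lambda(M/N)$. You merely spell out the vanishing of $\Hom_R(M/N,\omega_R)$ and $\Ext^1_R(M,\omega_R)$ (via the maximal Cohen--Macaulayness of torsion-free modules in dimension one) that the paper leaves implicit.
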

\begin{proof} Apply $\Hom_R(\ \_\ ,\omega_R)$ to the exact sequence $0\rightarrow N\rightarrow M \rightarrow M/N\rightarrow 0$. By the assumption that $M/N$ has finite length and $M$ is torsion-free, we deduce an exact sequence
$$0\rightarrow \Hom_R(M,\omega_R)\rightarrow \Hom_R(N,\omega_R)\rightarrow \Ext^1_R(M/N,\omega_R)\rightarrow 0.$$
Now by local duality and Matlis duality, we can show that
$$\lambda(M/N)=\lambda(\Ext^1_R(M/N,\omega_R))$$
and then the result follows.
\end{proof}
\begin{theorem}\label{p:05} Let $X$ be an algebraic curve over $k$. Let $\widehat{\sI}(\sO_X)$ be the MJ-multiplier ideal and $\mf{C}_X$ be the conductor ideal. Then $X$ is a local complete intersection if and only if $\widehat{\sI}(\sO_X)=\mf{C}_X$.
\end{theorem}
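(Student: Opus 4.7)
My plan is to prove both implications. The forward direction (LCI $\Rightarrow$ equality) follows from Remark~\ref{rmk:01}(1) combined with Grothendieck duality, exploiting the invertibility of $\omega_X$. The converse is more subtle; I would localize at a singular point and first use Lemma~\ref{p:09} to promote the hypothesis to the submodule identity $\widehat{\sI}(R)\cdot\omega_R=\omega^{GR}_R$, then extract LCI by revisiting the embedded-link analysis of Theorem~\ref{p:07}.

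For the forward direction, assume $X$ is LCI. Then $X$ is Gorenstein, so $\omega_X$ is a line bundle. By Remark~\ref{rmk:01}(1), $\widehat{\sI}(\sO_X)\cdot\omega_X=\omega^{GR}_X$. Grothendieck duality for the finite morphism $\mu\colon X'\to X$ identifies $\mu_*\omega_{X'}=\sHom_{\sO_X}(\mu_*\sO_{X'},\omega_X)$, and since $\omega_X$ is invertible this equals $\sHom_{\sO_X}(\mu_*\sO_{X'},\sO_X)\otimes\omega_X=\mf{C}_X\otimes\omega_X$. Hence $\widehat{\sI}(\sO_X)\cdot\omega_X=\mf{C}_X\cdot\omega_X$, and cancelling the invertible $\omega_X$ gives $\widehat{\sI}(\sO_X)=\mf{C}_X$.

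For the converse, fix a singular point $p$ with $(R,\mf m)=\sO_{X,p}$ and normalization $\overline R$, and assume $\widehat{\sI}(R)=\mf{C}_R$. Since $\omega_R$ is torsion-free of rank one, one has $\Hom_R(\widehat{\sI}(R)\omega_R,\omega_R)=(R:\widehat{\sI}(R))_{Q(R)}$, so Lemma~\ref{p:09} applied to $\widehat{\sI}(R)\omega_R\subseteq\omega_R$ yields
\[
\lambda\bigl(\omega_R/\widehat{\sI}(R)\omega_R\bigr)=\lambda\bigl((R:\widehat{\sI}(R))_{Q(R)}/R\bigr).
\]
Using the hypothesis and the identity $(R:\mf{C}_R)_{Q(R)}=\overline R$ for a reduced one-dimensional $R$, the right side equals $\delta=\lambda(\overline R/R)$. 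Combined with the inclusions $\widehat{\sI}(R)\omega_R\subseteq\omega^{GR}_R\subseteq\omega_R$ from Theorem~\ref{p:07} and the identity $\lambda(\omega_R/\omega^{GR}_R)=\delta$ (itself Lemma~\ref{p:09} applied to $R\subseteq\overline R$), we conclude
\[
\widehat{\sI}(R)\cdot\omega_R=\omega^{GR}_R.
\]

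To finish, I would extract LCI from this last identity by revisiting the embedded setup of the proof of Theorem~\ref{p:07}: embedding $X\hookrightarrow\nA^N$ with generic generators and using Proposition~\ref{p:03}, the equality forces $\Jac_X\cdot q_{J_0}=\Jac_{J_0}$ for a generic link $V_{J_0}$, which in turn forces $V_{J_0}=X$, so $X$ is locally cut out by exactly $\codim_{\nA^N}X$ equations. The main obstacle will be rigorously pushing through this last step; as an alternative one can invoke the equality clause of the Lipman-Sathaye Jacobian formula --- Proposition~\ref{p:11} uses only its inclusion form, but the sharper statement characterizes $\Jac(R/A_L)\cdot(\overline R:\Jac(\overline R/A_L))=\mf{C}_R$ for a generic Noether normalization $A_L$ by $R$ being a complete intersection over $A_L$, which in dimension one is equivalent to $R$ being a local complete intersection.
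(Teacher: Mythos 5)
Your forward direction is correct and matches the paper's (Remark \ref{rmk:01}(1) combined with Lemma \ref{p:08}). The gap is in the converse, and it sits exactly where you flag it. First, the ``promotion'' step gains nothing: the identity $\mf{C}_R\cdot\omega_R=\omega^{GR}_R$ holds for \emph{every} reduced curve germ (your own length computation proves it with $\mf{C}_R$ in place of $\widehat{\sI}(R)$, without using the hypothesis), so $\widehat{\sI}(R)\cdot\omega_R=\omega^{GR}_R$ is just the hypothesis restated --- and in fact a priori weaker, since $(R:\mathfrak{a})=(R:\mf{C}_R)$ does not recover $\mathfrak{a}=\mf{C}_R$ unless $\mathfrak{a}$ is divisorial, and $R$ is not yet known to be Gorenstein. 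Second, neither proposed route for the real step is sound as written. For route (a): chasing equality through the chain of inclusions in the proof of Theorem \ref{p:07} yields at best $\overline{\Jac_X\cdot q_0}=\overline{\Jac_0}$ on the normalization, and you offer no argument for why this should force $V_{J_0}=X$; that implication is the whole theorem in disguise. For route (b): \cite{Lipman:JacBrianSkoda} proves the inclusion $(\overline{R}:\Jac(\overline{R}/A))\cdot\Jac(R/A)\subseteq\mf{C}_R$ but contains no ``equality clause'' characterizing complete intersections, so you are invoking a theorem that is not there.

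What the paper actually does at this point is the substantive content of the proof: it chooses a generic Noether normalization $\mu:X\to\nA=\Spec k[y]$ so that (Claim \ref{eq:07}) the absolute and relative Jacobians agree after extension to the semilocal ring $S$ of the normalization, rewrites the hypothesis via Dedekind complementary modules as $R:\mf{C}(X'/\nA)_{\mf{p}}=\Jac(X/\nA)_{\mf{p}}\cdot S$, and then proves a length inequality (Claim \ref{eq:05}, using a canonical module $K$ with $R\subseteq K\subseteq S$ and Lemma \ref{p:09}) that squeezes $\Jac(X/\nA)_{\mf{p}}$ between $R\cdot\delta$ and $S\cdot\delta$, forcing $\Jac(X/\nA)_{\mf{p}}$ to be a principal $R$-ideal. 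Only then does LCI follow, via Lipman's lemma \cite{Lipman:JacobianIdeal} giving $\pd_R(\Omega^1_{X/\nA})_{\mf{p}}\le 1$ and then Vasconcelos--Ferrand. The ingredient you are reaching for in route (b) is essentially a Kunz-type comparison of K\"ahler and Dedekind differents, which is exactly what this principality argument establishes by hand; without it (or the paper's argument) your proof of the hard direction is incomplete.
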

\begin{proof} If $X$ is a local complete intersection, then the result follows from Remark \ref{rmk:01} (1) (which is still true in our case) and Lemma \ref{p:08}. So we just need to prove the sufficient part of the theorem by assuming that $\widehat{\sI}(\sO_X)=\mf{C}_X$.

The question is local, so we can assume that $X$ is an affine curve. Let $f:X' \longrightarrow X$ be the normalization of $X$. Note that since $f$ is finite, by Remark \ref{rmk:02} (1), the condition $\widehat{\sI}(\sO_X)=\mf{C}_X$ is the same as
\begin{equation}\label{eq:11}
\Jac_X\cdot \sO_{X'}=\mf{C}_X\cdot \Jac(X'/X),
\end{equation}
where $\Jac_X$ is the Jacobian ideal of $X$ and $\Jac(X'/X)$ is the relative Jacobian of $f$. Let $\mf{p}\in X$ be a closed point and $R:=\sO_{\mf{p}}$ be the local ring at $\mf{p}$. Consider the following fiber product
$$\begin{CD}
X'_{\mf{p}} @>>> X'\\
@VVV @VVf V\\
\Spec R @>>> X
\end{CD}$$
where $X'_{\mf{p}}:=\Spec R\times_X X'$ and we can write $X'_{\mf{p}}=\Spec S$. Note that $S$ is a regular semilocal noetherian ring and is a finitely generated $R$-module. Furthermore since $S$ is locally principal, it is then principal, i.e., any ideal of $S$ is generated by an element of $S$.

Now we shall take a Noether normalization $\mu: X\longrightarrow \nA=\spec k[y]$ for some $y$ in $\sO_X$. Write $\Jac(X/\nA)$ and $\Jac(X'/\nA)$ to be relative Jacobian ideals. We make the following crucial claim in our proof.
\begin{claim}\label{eq:07} we can choose $y$ general so that along $X'_{\mf{p}}$, we have
\begin{itemize}
\item [(1)] $\Jac(X'/X)\cdot S=\Jac(X'/\nA)\cdot S$.
\item [(2)] $(\Jac_X)_{\mf{p}}\cdot S=\Jac(X/\nA)_{\mf{p}}\cdot S$;
\end{itemize}
\end{claim}
\noindent{\em Proof of claim.} We can assume $X=\Spec k[x_1,\cdots, x_n]$, where $x_i$'s are generators of $k$-algebra. Consider the natural morphism
$$S\otimes_R\Omega^1_{R/k}\stackrel{u}{\longrightarrow} \Omega^1_{S/k}\rightarrow \Omega^1_{S/R}\rightarrow 0.$$
The image of $u$ is generated by $1\otimes dx_1,\cdots, 1\otimes dx_n$ as $S$-module since $\Omega^1_{R/k}$ is generated by those $dx_i$'s. Because $S$ is regular we see that $\Omega^1_{S/k}$ is isomorphic to $S$.  Also because $S$ is semilocal, thus the image of $u$, as an $S$-submodule, can be generated by one element in the form $1\otimes dy$, where $y=a_1x_1+\cdots+a_nx_n$ and $a_1,\cdots, a_n$ are general elements of the field $k$. Thus we take this $y$ to produce the Noether normalization $\nA=\Spec k[y]$ with the morphism $\mu:X\rightarrow \nA$. Write $\mf{q}:=\mu(\mf{p})$ and $A:=k[y]_{\mf{q}}$. Then in the sequence
$$S\otimes_A\Omega^1_{A/k}\stackrel{u_A}{\longrightarrow} \Omega^1_{S/k}\rightarrow \Omega^1_{S/A}\rightarrow 0,$$
the image of $u_A$ is $S\cdot (1\otimes dy)$. This shows that $\Omega^1_{S/R}=\Omega^1_{S/A}$ and then the statement (1) follows. Again use this element $y$, we can split
$$S\otimes_R\Omega^1_{R/k}=S\cdot (1\otimes dy)\oplus T,$$
where $T$ is the torsion part. We can check that $T=S\otimes_R\Omega^1_{R/A}$. Also from the splitting above we see that $\Fitt^1(S\otimes_R\Omega^1_{R/k})=\Fitt^0(T)$ and then the statement (2) follows. Thus Claim is proved.\\

Having Claim \ref{eq:07} in hand, we see that on $X'_{\mf{p}}$, the condition (\ref{eq:11}) becomes
\begin{equation*}\label{eq:08}
\mf{C}_{X,\mf{p}}\cdot (\Jac(X'/X)\cdot S)=(\Jac_X)_{\mf{p}}\cdot S,
\end{equation*}
which is equivalent to the equality
\begin{equation}\label{eq:03}
\mf{C}_{X,\mf{p}}\cdot(\Jac(X'/\nA)\cdot S)=\Jac(X/\nA)_{\mf{p}}\cdot S.
\end{equation}
We fix such Noether normalization $\mu: X\longrightarrow \nA$ in the sequel.

Let $\mf{C}(X/\nA)$ and $\mf{C}(X'/\nA)$ be the Dedekind complementary modules of $X/\nA$ and $X'/\nA$ (for the details, see \cite[Definition 8.4]{Kunz:ResDuality08}).
We have inclusions $R\subseteq S\subseteq \mf{C}(X'/\nA)_{\mf{p}}\subseteq \mf{C}(X/\nA)_{\mf{p}}$. On the other hand, we have inclusions
\begin{equation}\label{eq:09}
\Jac(X/\nA)_{\mf{p}}\subseteq R:\mf{C}(X/\nA)_{\mf{p}}\subseteq R:\mf(X'/\nA)_{\mf{p}}.
\end{equation}
Since $S$ is principal, $\Jac(X'/\nA)\cdot S=S\cdot y$ for a nonzero divisor $y\in S$.  Because $X'$ is a nonsingular curve, we have $\mf{C}(X'/\nA)=\sO_{X'}:\Jac(X'/\nA)$ and therefore we have $\mf{C}(X'/\nA)_{\mf{p}}=S:(\Jac(X'/\nA)\cdot S)=S\cdot y^{-1}$. Thus $R:\mf{C}(X'/\nA)_\mf{p}=R:S\cdot y^{-1}=(R:S)y=\mf{C}_{X,\mf{p}}\cdot y$. But $\mf{C}_{X,\mf{p}}$ is also an $S$-ideal so $\mf{C}_{X,\mf{p}}\cdot y=\mf{C}_{X,\mf{p}}\cdot (S\cdot y)=\mf{C}_{X,\mf{p}}\cdot (\Jac(X'/\nA)\cdot S)$. Thus we deduce that
$$R:\mf{C}(X'/\nA)_\mf{p}=\mf{C}_{X,\mf{p}}\cdot (\Jac(X'/\nA)\cdot S),$$
and therefore (\ref{eq:03}) is equivalent to
\begin{equation}\label{eq:10}
R:\mf{C}(X'/\nA)_\mf{p}=\Jac(X/\nA)_{\mf{p}}\cdot S.
\end{equation}

Again, since $S$ is principal, we have $\Jac(X/\nA)_{\mf{p}}\cdot S=S\cdot \delta$, where $\delta\in \Jac(X/\nA)_{\mf{p}}$. We show that this $\delta$ generates $\Jac(X/\nA)_{\mf{p}}$ as an $R$-ideal. To this end, consider inclusions of (\ref{eq:09}),
$$R\cdot \delta\subseteq \Jac(X/\nA)_{\mf{p}}\subseteq R:\mf{C}(X/\nA)_{\mf{p}}\subseteq R:\mf{C}(X'/\nA)_{\mf{p}}.$$
By equality (\ref{eq:10}), we have $R:\mf{C}(X'/\nA)_\mf{p}=\Jac(X/\nA)_{\mf{p}}\cdot S=S\cdot \delta$.
Thus we  have
$$R\cdot \delta\subseteq \Jac(X/\nA)_{\mf{p}}\subseteq R:\mf{C}(X/\nA)_{\mf{p}}\subseteq R:\mf{C}(X'/\nA)_{\mf{p}}=S\cdot\delta.$$
Granting the inequality in Claim \ref{eq:05} for the time being,  we immediately have $R\cdot \delta=\Jac(X/\nA)_{\mf{p}}$. This means that $\Jac(X/\nA)$ is principal at $\mf{p}$.

Now by using \cite[Lemma 1]{Lipman:JacobianIdeal}, we see that the projective dimension $\pd_R (\Omega^1_{X/\nA})_{\mf{p}}\leq 1$ since $\Jac(X/\nA)$ is principal at $\mf{p}$. Also notice that we have a short exact sequence
$$0\rightarrow \mu^*\Omega^1_{\nA/k}\rightarrow \Omega^1_{X/k}\rightarrow \Omega^1_{X/\nA}\rightarrow 0,$$
where $\Omega^1_{\nA/k}$ is locally free. Then we obtain that the projective dimension $\pd_R (\Omega^1_{X/k})_{\mf{p}}\leq 1$ and therefore $\pd_{\sO_X}\Omega^1_{X/k}\leq 1$, which implies that $X$ is locally a complete intersection by the well-known result of \cite{Vasconcelos:NormDiffer} or \cite{Ferrand:CompInter}.

Finally, we need to prove the following claim to finish our proof.
\begin{claim}\label{eq:05} Use $\lambda$ to denote the length of modules. Then one has
$$\lambda(\frac{R:\mf{C}(X'/\nA)_{\mf{p}}}{R:\mf{C}(X/\nA)_{\mf{p}}})\ge\lambda(\frac{\mf{C}(X/\nA)_{\mf{p}}}{\mf{C}(X'/\nA)_{\mf{p}}})=\lambda(\frac{S}{R})$$
\end{claim}
{\em Proof.} First note that the last equality can be deduced from duality and Lemma \ref{p:09} by applying $\Hom_R(\ \_\ ,\mf{C}(X/\nA)_{\mf{p}})$ to the exact sequence $0\rightarrow R\rightarrow S\rightarrow S/R\rightarrow 0$. So we just need to show the first inequality.

To this end, we note that there exits a canonical module $K$ of $R$  such that $R\subseteq K\subseteq S$. Indeed, we can first choose a canonical ideal $\omega$ of $R$, then there is an element $t\in \omega$ such that $St=\omega\cdot S$ since $S$ is principal. Then we simply take $K:=t^{-1}\omega$. Also note that such $K$ satisfies the condition $KS=S$.

Now by the choice of $K$, we have $\mf{C}(X/\nA)_{\mf{p}}\subseteq K\mf{C}(X/\nA)_{\mf{p}}$ and $\mf{C}(X'/\nA)_{\mf{p}}=K\mf{C}(X'/\nA)_{\mf{p}}$, which implies that
$$\lambda(\frac{\mf{C}(X/\nA)_{\mf{p}}}{\mf{C}(X'/\nA)_{\mf{p}}})\leq \lambda(\frac{K\mf{C}(X/\nA)_{\mf{p}}}{K\mf{C}(X'/\nA)_{\mf{p}}}).$$
On the other hand, we have $K:K\mf{C}(X'/\nA)_{\mf{p}}=(K:K):\mf{C}(X'/\nA)_{\mf{p}}=R:\mf{C}(X'/\nA)_{\mf{p}}$ since $K:K=R$. Similarly, $K:K\mf{C}(X/\nA)_{\mf{p}}=(K:K):\mf{C}(X/\nA)_{\mf{p}}=R:\mf{C}(X/\nA)_{\mf{p}}$. Thus by Lemma \ref{p:09} we have
$$\lambda(\frac{K\mf{C}(X/\nA)_{\mf{p}}}{K\mf{C}(X'/\nA)_{\mf{p}}})=\lambda(\frac{\Hom_R(K\mf{C}(X'/\nA)_{\mf{p}},K)}{\Hom_R(K\mf{C}(X/\nA)_{\mf{p}},K)})=\lambda(\frac{R:\mf{C}(X'/\nA)_{\mf{p}}}{R:\mf{C}(X/\nA)_{\mf{p}}}),$$
which proves the claim.\\

\end{proof}

\begin{remark}\label{rmk:03} Exactly follow the same argument, Theorem \ref{p:05} can be proved for codimension one points on any variety, i.e., let $X$ be a variety of any dimension and let $\mf{p}\in X$ be a codimension one point, then $X$ is a local complete intersection at $\mf{p}$ if and only if $\widehat{\sI}(\sO_X)_{\mf{p}}=\mf{C}_{X,\mf{p}}$.
\end{remark}

\begin{example}\label{eg:01} (1) Consider $X$ to be the cone over the Segre embedding $\nP^1\times\nP^1\subset \nP^3$ over $\nC$. Then $X$ is MJ-canonical so that $\widehat{\sI}(\sO_X)=\mf{C}_X=\sO_X$. But $X$ is not a local complete intersection. For more details, we refer to \cite[Example 3.13]{Ein:SingMJDiscrapency}.

(2) Consider $X$ to be the cone over a nonsingular hypersurface of degree $d$ in $\nP^{d-1}$ over $\nC$. Then $X$ is a normal locally complete intersection. $X$ is MJ-log canonical but not MJ-canonical. Thus $\widehat{\sI}(\sO_X)$ is not trivial while $\mf{C}_X$ is trivial. See \cite[Example 3.12]{Ein:SingMJDiscrapency} for more details.
\end{example}

At the end of this section, we discuss some evidences for a higher dimensional variety whose MJ-multiplier ideal is the same as its conductor ideal.

\begin{lemma}\label{p:08} Let $X$ be a variety over $k$ and $f:X'\longrightarrow X$ be its normalization. Let $\mf{C}_X$ be the conductor ideal of $X$. Then one has $\mf{C}_X\cdot \omega_X\subseteq f_*\omega_{X'}$. If furthermore $X$ is Gorenstein then one has $\mf{C}_X=f_*\omega_{X'}\otimes\omega^{-1}_X$.
\end{lemma}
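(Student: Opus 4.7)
The plan is to establish and exploit the Grothendieck duality identification
$$f_*\omega_{X'}\;\cong\;\sHom_{\sO_X}(f_*\sO_{X'},\omega_X),$$
under which the trace map $\Tr\colon f_*\omega_{X'}\to\omega_X$ corresponds to evaluation at $1\in f_*\sO_{X'}$, that is $\phi\mapsto\phi(1)$. Since $f$ is finite and birational, $\Tr$ is injective, so we may and will regard $f_*\omega_{X'}$ as a coherent subsheaf of $\omega_X$. On the other side, the conductor is realized as the $\sO_X$-submodule $\mf{C}_X\subseteq\sO_X$ of those $c$ satisfying $c\cdot f_*\sO_{X'}\subseteq\sO_X$; equivalently, $\mf{C}_X=\sHom_{\sO_X}(f_*\sO_{X'},\sO_X)$, the identification again being evaluation at $1$.

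For the containment $\mf{C}_X\cdot\omega_X\subseteq f_*\omega_{X'}$ I will argue locally: given local sections $c$ of $\mf{C}_X$ and $\omega$ of $\omega_X$, define the $\sO_X$-linear map
$$\phi_{c,\omega}\colon f_*\sO_{X'}\longrightarrow \omega_X,\qquad a\longmapsto (ca)\cdot\omega.$$
This is well defined because $ca\in\sO_X$ by the very definition of $\mf{C}_X$. Under the duality identification, $\phi_{c,\omega}$ corresponds to a section of $f_*\omega_{X'}$, and its trace is $\phi_{c,\omega}(1)=c\cdot\omega$. Hence $c\cdot\omega$ lies in $f_*\omega_{X'}\subseteq\omega_X$, as required.

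When $X$ is Gorenstein, $\omega_X$ is an invertible sheaf, so $\sHom_{\sO_X}(-,\omega_X)\cong\sHom_{\sO_X}(-,\sO_X)\otimes\omega_X$; applied to $f_*\sO_{X'}$ and combined with the duality identification this gives $f_*\omega_{X'}=\mf{C}_X\otimes\omega_X$, and tensoring with $\omega_X^{-1}$ yields $\mf{C}_X=f_*\omega_{X'}\otimes\omega_X^{-1}$. The only slightly delicate point of the whole argument is the duality isomorphism itself without any Cohen-Macaulay hypothesis on $X$; this is a standard instance of Grothendieck duality for the finite morphism $f$ between equidimensional varieties over $k$, encoding $\omega_{X'}\cong f^{!}\omega_X$. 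One can alternatively avoid the shriek functor by defining $f_*\omega_{X'}$ inside the rational sections of $\omega_X$ (both sheaves coincide generically via $\Tr$) and verifying the asserted containments and equalities directly in the total quotient ring of $\sO_X$.
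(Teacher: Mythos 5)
Your proof is correct and follows essentially the same route as the paper: both rest on the duality identification $f_*\omega_{X'}\cong\sHom_{\sO_X}(f_*\sO_{X'},\omega_X)$ together with $\mf{C}_X=\sHom_{\sO_X}(f_*\sO_{X'},\sO_X)$, your map $\phi_{c,\omega}$ being exactly the image of $c\otimes\omega$ under the natural map $\mf{C}_X\otimes\omega_X\to\sHom_{\sO_X}(f_*\sO_{X'},\omega_X)$ that the paper encodes in its commutative diagram. The Gorenstein case via invertibility of $\omega_X$ likewise matches the paper's (terser) argument.
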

\begin{proof} Recall that $\mf{C}_X=\sHom(f_*\sO_{X'},\sO_X)$. Then we have the following diagram
$$\xymatrix{
   &  \sHom(f_*\sO_{X'},\omega_X)=f_*\omega_{X'}\ar[d]\\
\mf{C}_X\otimes\omega_X\ar[r]\ar[ru]    &  \omega_X
}$$
This implies that $\mf{C}_X\cdot \omega_X\subseteq f_*\omega_{X'}$. If $X$ is Gorenstein, then it is by definition that $\mf{C}_X=f_*\omega_{X'}\otimes\omega^{-1}_X$.
\end{proof}

\begin{proposition}\label{p:06} Let $X$ be a Gorenstein variety over $\nC$, $\widehat{\sI}(\sO_X)$ be the MJ-multiplier ideals, and $\mf{C}_X$ be the conductor ideal. Let $X'$ be the normalization of $X$. Assume that $\widehat{\sI}(\sO_X)=\mf{C}_X$. Then one has $\omega_{X'}=\omega^{GR}_{X'}$, where $\omega^{GR}_{X'}$ is the Grauert-Riemenschneider canonical sheaf of $X'$.
\end{proposition}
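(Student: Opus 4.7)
The plan is to combine Theorem \ref{p:07} and Lemma \ref{p:08} through the naturality of the Grauert--Riemenschneider canonical sheaf under the normalization morphism. Since $X$ is Gorenstein, Lemma \ref{p:08} yields $\mf{C}_X\cdot\omega_X = f_*\omega_{X'}$. Using the hypothesis $\widehat{\sI}(\sO_X)=\mf{C}_X$, Theorem \ref{p:07} then gives
$$f_*\omega_{X'} \;=\; \mf{C}_X\cdot\omega_X \;=\; \widehat{\sI}(\sO_X)\cdot\omega_X \;\subseteq\; \omega^{GR}_X.$$

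The next step is to identify $\omega^{GR}_X$ with $f_*\omega^{GR}_{X'}$. Because $f:X'\to X$ is finite and birational, any resolution of singularities $g:Y\to X'$ composes with $f$ to give a resolution $f\circ g:Y\to X$. Independence of the Grauert--Riemenschneider sheaf on the chosen resolution yields
$$\omega^{GR}_X \;=\; (f\circ g)_*\omega_Y \;=\; f_*(g_*\omega_Y) \;=\; f_*\omega^{GR}_{X'}.$$
I would also check that the canonical trace map $\omega^{GR}_{X'}\hookrightarrow\omega_{X'}$ is injective: $g_*\omega_Y$ is torsion-free as the pushforward of a line bundle under a dominant proper morphism, and it agrees with $\omega_{X'}$ over the smooth locus of $X'$, whose complement has codimension at least two by normality, so it embeds into the reflexive hull $\omega_{X'}$.

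Substituting these observations into the previous inclusion yields $f_*\omega_{X'}\subseteq f_*\omega^{GR}_{X'}\subseteq f_*\omega_{X'}$, which forces the equality $f_*\omega^{GR}_{X'}=f_*\omega_{X'}$. Since $f$ is affine, pushforward gives an equivalence between quasi-coherent $\sO_{X'}$-modules and quasi-coherent $f_*\sO_{X'}$-modules on $X$; in particular it is faithful on monomorphisms. Consequently the inclusion $\omega^{GR}_{X'}\hookrightarrow\omega_{X'}$ is already an equality on $X'$, which is the desired conclusion. The main potential obstacle is the clean identification $\omega^{GR}_X = f_*\omega^{GR}_{X'}$ for the normalization $f$, but once this formal compatibility is in place the argument is essentially a short diagram chase combining Theorem \ref{p:07} and Lemma \ref{p:08}.
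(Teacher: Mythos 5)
Your proposal is correct and follows essentially the same route as the paper: apply Lemma \ref{p:08} in the Gorenstein case to get $\mf{C}_X\cdot\omega_X=f_*\omega_{X'}$, combine with Theorem \ref{p:07} and the identification $\omega^{GR}_X=f_*\omega^{GR}_{X'}$ to force $f_*\omega^{GR}_{X'}=f_*\omega_{X'}$, and then descend along the finite affine morphism $f$. The only difference is that you spell out the sandwich of inclusions and the injectivity of $\omega^{GR}_{X'}\hookrightarrow\omega_{X'}$, which the paper leaves implicit.
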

\begin{proof} Let $f: X'\longrightarrow X$ be the normalization morphism from $X'$ to $X$. By Lemma \ref{p:08}, one has $$\mf{C}_X= f_*\omega_{X'}\otimes \omega^{-1}_X.$$
On the other hand, by Theorem \ref{p:07}, we have $\widehat{\sI}(\sO_X)\subseteq \omega^{GR}_X\otimes \omega^{-1}_X$. Thus the assumption $\widehat{\sI}(\sO_X)=\mf{C}_X$ implies that $\omega^{GR}_X=f_*\omega_{X'}$. Note that $\omega^{GR}_X= f_*\omega^{GR}_{X'}$. So we have $f_*\omega^{GR}_{X'}=f_*\omega_{X'}$. Since $f$ is affine and finite, we deduce that $\omega^{GR}_{X'}=\omega_{X'}$.
\end{proof}

\begin{corollary} Assume that $X$ is a Gorenstein surface over $\nC$ such that $\widehat{\sI}(\sO_X)=\mf{C}_X$. Then the normalization of $X$ has rational singularities.
\end{corollary}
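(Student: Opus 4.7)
The plan is to deduce this almost immediately from Proposition \ref{p:06} combined with the classical characterization of rational singularities on normal Cohen-Macaulay varieties. The first step is to apply Proposition \ref{p:06} directly: since $X$ is Gorenstein over $\nC$ and $\widehat{\sI}(\sO_X) = \mf{C}_X$, the proposition yields the equality $\omega_{X'} = \omega^{GR}_{X'}$ on the normalization $X'$.

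The second step is the observation that $X'$, being a normal surface, is automatically Cohen-Macaulay by Serre's $(R_1)+(S_2)$ criterion. Thus $\omega_{X'}$ is an honest coherent (in fact reflexive) sheaf, and for any resolution $g\colon Y \to X'$ the subsheaf $\omega^{GR}_{X'} = g_*\omega_Y \hookrightarrow \omega_{X'}$ agrees with $\omega_{X'}$ on the regular locus.

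The final step is to invoke the standard criterion of Kempf: on a normal Cohen-Macaulay variety $V$, one has rational singularities if and only if the natural inclusion $g_*\omega_Y \hookrightarrow \omega_V$ is an equality for some (equivalently, any) resolution $g$. The standard proof combines Grauert-Riemenschneider vanishing (forcing $Rg_*\omega_Y = g_*\omega_Y$) with Grothendieck duality: one has $R\sHom(Rg_*\sO_Y,\omega_V[\dim V]) \simeq Rg_*\omega_Y[\dim V]$, so the hypothesis $g_*\omega_Y = \omega_V$ translates, after biduality for Cohen-Macaulay sheaves, to $Rg_*\sO_Y \simeq \sO_V$, which is the definition of rational singularities. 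Combining Kempf's criterion with the first step produces the conclusion.

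I do not anticipate any real obstacle: the substantive geometric content has already been packaged into Proposition \ref{p:06}, and the remaining argument is a citation of Kempf's theorem, valid in any dimension (see, e.g., Koll\'ar-Mori). If one prefers an argument that avoids invoking the general criterion, for surfaces it is even more direct: since $X'$ is Cohen-Macaulay of dimension two, Grothendieck duality identifies $R^1g_*\sO_Y$ with the Matlis dual of $\omega_{X'}/g_*\omega_Y$ at each singular point, so the equality $g_*\omega_Y = \omega_{X'}$ forces $R^1g_*\sO_Y = 0$, which is exactly rationality of the surface singularities on $X'$.
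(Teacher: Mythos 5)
Your proposal is correct and follows essentially the same route as the paper: apply Proposition \ref{p:06} to get $\omega_{X'}=\omega^{GR}_{X'}$, note that the normal surface $X'$ is Cohen--Macaulay, and conclude by Kempf's criterion. The paper's own proof is just a terser version of this, so the extra detail you supply on the duality argument is fine but not a different method.
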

\begin{proof} Let $X'$ be the normalization of $X$. Then by the proposition above, $\omega_{X'}=\omega^{GR}_{X'}$. But $X'$ is Cohen-Macaulay, thus $X'$ has rational singularities.
\end{proof}

\bibliographystyle{alpha}

\end{document}